\journal{Arxiv}
\newtheorem{theorem}{Theorem}
\newtheorem{lemma}{Lemma}
\newtheorem{corollary}{Corollary}
\newtheorem{example}{Example}
\newtheorem{remark}{Remark}
\newtheorem{problem}{Problem}
\begin{document}

\begin{frontmatter}

\title{Eigenfunctions and minimum 1-perfect bitrades in the Hamming graph \tnoteref{label0}}
\tnotetext[label0]{This work was funded by the Russian Science Foundation under grant 18-11-00136.}

\author[01]{Alexandr~Valyuzhenich}
\ead{graphkiper@mail.ru}

\address[01]{Sobolev Institute of Mathematics, Ak. Koptyug av. 4, Novosibirsk 630090, Russia}

\begin{abstract}
The Hamming graph $H(n,q)$ is the graph whose vertices are the words of length $n$ over the alphabet $\{0,1,\ldots,q-1\}$, where two vertices are adjacent if they differ in exactly one coordinate. The adjacency matrix of $H(n,q)$ has $n+1$ distinct eigenvalues
$n(q-1)-q\cdot i$  with corresponding eigenspaces $U_{i}(n,q)$ for $0\leq i\leq n$.
In this work we study functions belonging to a direct sum $U_i(n,q)\oplus U_{i+1}(n,q)\oplus\ldots\oplus U_j(n,q)$ for $0\leq i\leq j\leq n$.
We find the minimum cardinality of the support of such functions for $q=2$ and for $q=3$, $i+j>n$.
In particular, we find the minimum cardinality of the support of eigenfunctions from the eigenspace $U_{i}(n,3)$ for $i>\frac{n}{2}$.
Using the correspondence between $1$-perfect bitrades and eigenfunctions with eigenvalue $-1$,
we find the minimum size of a $1$-perfect bitrade in the Hamming graph $H(n,3)$.
\end{abstract}

\begin{keyword}
Hamming graph\sep eigenfunction\sep eigenfunctions of graphs\sep eigenspace\sep minimum support\sep trade\sep bitrade\sep 1-perfect bitrade
\vspace{\baselineskip}
\MSC[2010] 05C50\sep 05B30\sep 05E30
\end{keyword}

\end{frontmatter}


\section{Introduction}
In this work we consider the following extremal problem for eigenfunctions of graphs.
\begin{problem}\label{ProblemMinSupport}
Let $G$ be a graph and let $\lambda$ be an eigenvalue of the adjacency matrix of $G$.
Find the minimum cardinality of the support of a $\lambda$-eigenfunction of $G$.
\end{problem}
Problem~\ref{ProblemMinSupport} is directly related to the intersection problem of
two combinatorial objects and to the problem of finding the minimum cardinality of bitrades.
Often such problems can be considered as Problem \ref{ProblemMinSupport} for the corresponding graph and some eigenvalue with some additional discrete restrictions on the functions. In this context we would like to mention the papers by Graham et al. \cite{Graham}, Deza and Frankl \cite{DezaFrankl}, Frankl and Pach \cite{FranklPach} and Cho \cite{ChoLAA,ChoCombinatorica} on null designs and the paper by Hwang \cite{Hwang} on combinatorial trades. In more details, connections between eigenfunctions and bitrades are described in \cite{Krotovtezic,KrotovPerfectBitrades,KrotovMogilnykhPotapov,VorobevKrotov}.

Problem~\ref{ProblemMinSupport} was studied for the bilinear forms graphs in~\cite{SotnikovaBilinear}, for the cubical distance-regular graphs in~\cite{SotnikovaCubical}, for the Doob graphs in~\cite{Bespalov}, for the Grassmann graphs in \cite{ChoLAA,ChoCombinatorica,KrotovMogilnykhPotapov}, for the Hamming graphs in~\cite{Krotovtezic,Potapov,Valyuzhenich,VorobevKrotov}, for the Johnson graphs in~\cite{VMV}, for the Paley graphs in~\cite{GoryainovKabanovShalaginovValyuzhenich} and for the Star graph in \cite{KabanovKonstantinovaShalaginovValyuzhenich}.

The Hamming graph $H(n,q)$ is a graph whose vertices are the words of length $n$ over the alphabet $\{0,1,\ldots,q-1\}$; and two vertices are adjacent if they differ in exactly one coordinate.
The adjacency matrix of $H(n,q)$ has $n+1$ eigenvalues
$\lambda_{i}(n,q)=n(q-1)-q\cdot i$, where $0\leq i\leq n$.
Let $U_{[i,j]}(n,q)$, where $0\leq i\leq j\leq n$, denote a direct sum of eigenspaces of $H(n,q)$ corresponding to consecutive eigenvalues from $n(q-1)-q\cdot i$ to $n(q-1)-q\cdot j$. In this work we consider the following generalization of Problem \ref{ProblemMinSupport} for the Hamming graph.

\begin{problem}\label{ProblemHamming}
Let $n\geq 1$, $q\ge 2$ and $0\leq i\leq j\leq n$. Find the minimum cardinality of the support of functions from the space $U_{[i,j]}(n,q)$.
\end{problem}
In \cite{ValyuzhenichVorobev} Valyuzhenich and Vorob'ev solved Problem \ref{ProblemHamming} for arbitrary $q\geq 3$ except the case when $q=3$ and $i+j>n$.
Moreover, in \cite{ValyuzhenichVorobev} a characterization of functions from the space $U_{[i,j]}(n,q)$ with the minimum cardinality of the support
was obtained for $q\ge 3$, $i+j\le n$ and $q\ge 5$, $i=j$, $i>\frac{n}{2}$.
In this work we solve Problem \ref{ProblemHamming} for $q=2$ and $q=3$, $i+j>n$.
In particular, we find the minimum cardinality of the support of a $\lambda_{i}(n,3)$-eigenfunction of $H(n,3)$ for $i>\frac{n}{2}$.
Thus, Problem \ref{ProblemHamming} is now completely solved.
As we see below, in the case $q=3$ the eigenfunctions attaining the minimum cardinality of the support have more complicated structure than for $q>3$, so the case remaining after the preceding work and solved in the current paper is really exceptional.

Bitrades are used for constructing and studying combinatorial designs and codes (see \cite{Billington,Cavenagh,HedayatKhosrovshahi}).
One of important problems in the theory of bitrades is the problem of finding the minimum sizes of bitrades.
This problem was investigated for null designs \cite{ChoLAA,ChoCombinatorica,FranklPach}, for combinatorial bitrades \cite{Hwang}, for Latin bitrades \cite{PotapovLatin} and for q-ary Steiner bitrades \cite{KrotovTheminimumvolume,KrotovMogilnykhPotapov}.
In this work we study $1$-perfect bitrades in the Hamming graph.
The problems of the existence and classification of $1$-perfect bitrades and extended $1$-perfect bitrades in the Hamming graphs were studied in \cite{MogilnykhSolov'eva,VorobevKrotov} and in \cite{KrotovPerfectBitrades} respectively.
In this work we consider the following problem for $1$-perfect bitrades in the Hamming graph.
\begin{problem}\label{ProblemPerfectBitrade}
Let $n\ge 3$ and $q\ge 2$. Find the minimum size of a $1$-perfect bitrade in $H(n,q)$.
\end{problem}
For $q=2$ Problem \ref{ProblemPerfectBitrade} was essentially solved by Etzion and Vardy \cite{EtzionVardy} and Solov'eva \cite{Solov'eva}
(the results were formulated for more special cases of $1$-perfect bitrades embedded into perfect binary codes, but both proofs work in the general case).
In \cite{MogilnykhSolov'eva} Mogilnykh and Solov'eva for arbitrary $q\geq 2$  showed the existence of $1$-perfect bitrades in $H(n,q)$ of size
$2\cdot (q!)^\frac{n-1}{q}$. This fact implies that a lower bound $2^{n-\frac{n-1}{q}}\cdot (q-1)^\frac{n-1}{q}$ for the size of $1$-perfect bitrades in $H(n,q)$ proved in \cite{ValyuzhenichVorobev} is sharp for $q=4$, i.e. Problem \ref{ProblemPerfectBitrade} is solved for $q=4$.
In \cite{MogilnykhSolov'eva} Mogilnykh and Solov'eva found the minimum size of a $1$-perfect bitrade in $H(q+1,q)$ for arbitrary $q\geq 2$.
In this work, using the correspondence between $1$-perfect bitrades and $(-1)$-eigenfunctions, we solve Problem \ref{ProblemPerfectBitrade} for $q=3$.

The paper is organized as follows. In Section \ref{SectionDefinitions}, we introduce basic definitions and notations.
In Section \ref{SectionPreliminaries}, we give some preliminary results.
In Section \ref{SectionConstructionsFunctions}, we define four families of functions that have the minimum cardinality of the support in the space $U_{[i,j]}(n,q)$ for $q=2$ and for $q=3$ and $i+j>n$ respectively.
In Section \ref{SectionProblem2 for q=2}, we find the minimum cardinality of the support of functions from the space  $U_{[i,j]}(n,2)$.
In Section \ref{SectionProblem2 for q=3,i/2+j<=n}, we find the minimum cardinality of the support of functions from the space $U_{[i,j]}(n,3)$ for $\frac{i}{2}+j\leq n$ and $i+j>n$.
In Section \ref{SectionProblem2 for q=3,i/2+j>n}, we find the minimum cardinality of the support of functions from the space  $U_{[i,j]}(n,3)$ for $\frac{i}{2}+j>n$.
In Section \ref{SectionBitrades}, we prove that the minimum size of a $1$-perfect bitrade in $H(3m+1,3)$, where $m\ge 1$, is $2^{m+1}\cdot 3^m$.
\section{Basic definitions}\label{SectionDefinitions}
Let $G=(V,E)$ be a graph with the adjacency matrix $A(G)$. The set of neighbors of a vertex $x$ is denoted by $N(x)$.
Let $\lambda$ be an eigenvalue of the matrix $A(G)$. A function $f:V\longrightarrow{\mathbb{R}}$ is called a {\em $\lambda$-eigenfunction} of $G$ if $f\not\equiv 0$ and the equality
\begin{equation}\label{EigenfunctionDefinition}
\lambda\cdot f(x)=\sum_{y\in{N(x)}}f(y)
\end{equation}
holds for any vertex $x\in V$.
The set of functions $f:V\longrightarrow{\mathbb{R}}$ satisfying the equality (\ref{EigenfunctionDefinition}) for any vertex $x\in V$ is called a {\em $\lambda$-eigenspace} of $G$.
The {\em support} of a function $f:V\longrightarrow{\mathbb{R}}$ is the set $Supp(f)=\{x\in V~|~f(x)\neq 0\}$.
Denote $|f|=|Supp(f)|$.

Let $\Sigma_q=\{0,1,\ldots,q-1\}$.
The vertex set of the {\em Hamming graph} $H(n,q)$ is $\Sigma_{q}^n$ and two vertices are adjacent if they differ in exactly one coordinate.
It is well known that the set of eigenvalues of the adjacency matrix of $H(n,q)$ is $\{\lambda_{i}(n,q)=n(q-1)-q\cdot i\mid i=0,1,\ldots,n\}$.
Denote by $U_{i}(n,q)$ the $\lambda_{i}(n,q)$-eigenspace of $H(n,q)$. The direct sum of subspaces
$$U_i(n,q)\oplus U_{i+1}(n,q)\oplus\ldots\oplus U_j(n,q)$$ for $0\leq i\leq j\leq n$ is denoted by $U_{[i,j]}(n,q)$.

The {\em Cartesian product} $G\square H$ of graphs $G$ and $H$ is a graph with the vertex set $V(G)\times V(H)$; and
any two vertices $(u,u')$ and $(v,v')$ are adjacent if and only if either
$u=v$ and $u'$ is adjacent to $v'$ in $H$, or
$u'=v'$ and $u$ is adjacent to $v$ in $G$.

Let $G_1=(V_1,E_1)$ and $G_2=(V_2,E_2)$ be two graphs, let $f_1:V_1\longrightarrow{\mathbb{R}}$ and $f_2:V_2\longrightarrow{\mathbb{R}}$.
Let $G=G_1\square G_2$.
Define the {\em tensor product} $f_1\cdot f_2:V_1\times V_2\longrightarrow\mathbb{R}$ by the following rule: $(f_1\cdot f_2)(x,y)=f_1(x)f_2(y)$ for $(x,y)\in V(G)=V_1\times V_2$.

Let $y=(y_1,\ldots,y_{n-1})$ be a vertex of $H(n-1,q)$, $k\in{\Sigma_q}$ and $r\in\{1,2,\ldots,n\}$. We consider the vector $x=(y_1,\ldots,y_{r-1},k,y_{r},\ldots,y_{n-1})$ of length $n$.
Given a function $f:\Sigma_{q}^{n}\longrightarrow{\mathbb{R}}$,  we define the function $f_{k}^{r}:\Sigma_{q}^{n-1}\longrightarrow{\mathbb{R}}$ by the rule $f_{k}^{r}(y)=f(x)$. A function $f:\Sigma_{q}^n\longrightarrow{\mathbb{R}}$ is called {\em uniform} if for any $r\in\{1,2,\ldots,n\}$ there exists $l(r)\in\Sigma_q$ such that $f_{k}^{r}=f_{m}^{r}$ for all $k,m\in{\Sigma_q\setminus \{l(r)\}}$.

Let $\rm{Sym}(X)$ denote the symmetric group on a finite set $X$ and let $\rm{Sym}_{n}$ denote the symmetric group on the set $\{1,\ldots,n\}$.

Let $f(x_1,x_2,\ldots,x_n)$ be a function, let $\pi\in \rm{Sym}_n$ and let $\sigma_1,\ldots,\sigma_{n}\in \rm{Sym}(\Sigma_q)$.  We define the function $f_{\pi,\sigma_{1},\ldots,\sigma_{n}}$ by the following rule:
$$f_{\pi,\sigma_{1},\ldots,\sigma_{n}}(x_1,\ldots,x_n)=f(\sigma_{1}(x_{\pi(1)}),\ldots,\sigma_{n}(x_{\pi(n)})).$$

Let $G=(V,E)$ be a graph. For a vertex $x\in V$ denote $B(x)=N(x)\cup \{x\}$.
Let $T_0$ and $T_1$ be two disjoint nonempty subsets of $V$.
The ordered pair $(T_0,T_1)$ is called a {\em $1$-perfect bitrade} in $G$ if for any vertex $x\in V$ the set $B(x)$ either contains one element from $T_0$ and one element from $T_1$ or does not contain elements from $T_0\cup T_1$.

\begin{remark}\label{RemarkBitrade}
If $(T_0,T_1)$ is a $1$-perfect bitrade in a graph $G$, then the following properties hold:
\begin{itemize}
  \item $T_0$ and $T_1$ are independent sets in $G$ and $|T_0|=|T_1|$.
  \item the subgraph of $G$ induced by $T_0\cup T_1$ is a perfect matching.
\end{itemize}
\end{remark}

The {\em size} of a $1$-perfect bitrade $(T_0,T_1)$ is $|T_0|+|T_1|$.

\begin{example}\label{ExamplePerfectBitrade}
Let $T_0=\{000,111\}$ and $T_1=\{001,110\}$. Then $(T_0,T_1)$ is a $1$-perfect bitrade of size $4$ in $H(3,2)$ (see Figure \ref{Figure}).
\end{example}
\begin{figure}[H]
\begin{center}
\includegraphics[scale=0.4]{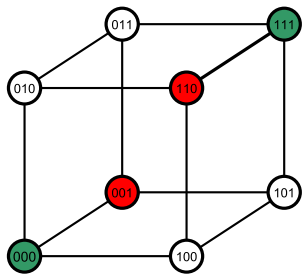}
\end{center}
\caption{$1$-perfect bitrade in $H(3,2)$.}\label{Figure}
\end{figure}

\begin{example}\label{ExamplePerfectBitradeCodes}
Let $G=(V,E)$ be a graph. Recall that a set $C\subseteq V$ is called a {\em $1$-perfect code} in $G$ if for any vertex $x\in V$ the set $B(x)$ contains one vertex from $C$. Let $C_1$ and $C_2$ be two $1$-perfect codes in $G$ ($C_1\neq C_2$). Then $(C_1\setminus C_2,C_2\setminus C_1)$ is a $1$-perfect bitrade in $G$.
\end{example}

Let $(T_0,T_1)$ be a $1$-perfect bitrade in a graph $G=(V,E)$. We define the function $f_{(T_0,T_1)}:V\longrightarrow{\{-1,0,1\}}$ by the following rule:
$$
f_{(T_0,T_1)}(x)=\begin{cases}
1,&\text{if $x\in T_0$;}\\
-1,&\text{if $x\in T_1$;}\\
0,&\text{otherwise.}
\end{cases}
$$

\section{Preliminaries}\label{SectionPreliminaries}
In this section we give useful preliminary results. The following result is a corollary of well known result for so-called NEPS of graphs (see \cite{Cvetkovic}, Theorem 2.3.4).
\begin{lemma}[\cite{ValyuzhenichVorobev}, Corollary 1]\label{product of functions}
Let $f_1\in{U_{i}(m,q)}$ and $f_2\in{U_{j}(n,q)}$. Then $f_1\cdot f_2\in{U_{i+j}(m+n,q)}$.
\end{lemma}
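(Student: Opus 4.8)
The plan is to reduce the statement to two elementary facts: that the Hamming graph factors as a Cartesian product, $H(m+n,q)=H(m,q)\,\square\,H(n,q)$, and that tensor products of eigenfunctions of the factors are eigenfunctions of the product. The first fact is immediate from the definitions: writing a word of length $m+n$ as $(x,y)$ with $x\in\Sigma_q^m$ and $y\in\Sigma_q^n$, two words $(x,y)$ and $(x',y')$ differ in exactly one coordinate precisely when either $x=x'$ and $y,y'$ differ in one coordinate, or $y=y'$ and $x,x'$ differ in one coordinate, which is exactly the adjacency relation in $H(m,q)\,\square\,H(n,q)$.

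For the second fact, I would argue directly from the eigenfunction equation (\ref{EigenfunctionDefinition}). Let $G=G_1\,\square\,G_2$ and suppose $g_1$ lies in the $\mu_1$-eigenspace of $G_1$ and $g_2$ lies in the $\mu_2$-eigenspace of $G_2$. For a vertex $(x,y)$ of $G$, the neighborhood $N(x,y)$ splits as $\{(x',y):x'\in N_{G_1}(x)\}\cup\{(x,y'):y'\in N_{G_2}(y)\}$, so
\[
\sum_{(u,v)\in N(x,y)}(g_1\cdot g_2)(u,v)=g_2(y)\sum_{x'\in N_{G_1}(x)}g_1(x')+g_1(x)\sum_{y'\in N_{G_2}(y)}g_2(y')=(\mu_1+\mu_2)\,g_1(x)g_2(y).
\]
Hence $g_1\cdot g_2$ lies in the $(\mu_1+\mu_2)$-eigenspace of $G$. (If one additionally wants $g_1\cdot g_2\not\equiv 0$, this is clear: pick $x$ with $g_1(x)\ne 0$ and $y$ with $g_2(y)\ne 0$, so $(g_1\cdot g_2)(x,y)\ne 0$.)

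Finally, I would apply this with $G_1=H(m,q)$, $G_2=H(n,q)$, $g_1=f_1$, $g_2=f_2$, $\mu_1=\lambda_i(m,q)=m(q-1)-qi$ and $\mu_2=\lambda_j(n,q)=n(q-1)-qj$, and check the arithmetic
\[
\mu_1+\mu_2=(m+n)(q-1)-q(i+j)=\lambda_{i+j}(m+n,q),
\]
which identifies the target eigenspace as $U_{i+j}(m+n,q)$. Alternatively, the whole statement is a special case of the eigenvalue/eigenvector description of NEPS of graphs (\cite{Cvetkovic}, Theorem 2.3.4) applied to the Cartesian product. I do not expect a genuine obstacle here; the only points requiring care are the splitting of the neighborhood in the Cartesian product and the bookkeeping that the eigenvalue indices add, i.e. that $\lambda_i(m,q)+\lambda_j(n,q)$ is precisely $\lambda_{i+j}(m+n,q)$ and not some other $\lambda_k$.
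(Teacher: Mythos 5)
Your argument is correct and complete: the Cartesian-product decomposition $H(m+n,q)=H(m,q)\,\square\,H(n,q)$, the additivity of eigenvalues for tensor products of eigenfunctions, and the check that $\lambda_i(m,q)+\lambda_j(n,q)=\lambda_{i+j}(m+n,q)$ are exactly what is needed. The paper itself gives no proof, citing the lemma from \cite{ValyuzhenichVorobev} as a corollary of the NEPS theorem in \cite{Cvetkovic}; your direct verification is precisely the standard argument underlying that citation, so this is essentially the same approach, just written out in full.
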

We will use Lemma \ref{product of functions} in Section \ref{SectionConstructionsFunctions}.
The following two results were proved in \cite{ValyuzhenichVorobev}.
\begin{lemma}[\cite{ValyuzhenichVorobev}, Lemma 4]\label{Reduction 1}
Let $f\in{U_{[i,j]}(n,q)}$ and $r\in\{1,2,\ldots,n\}$. Then the following statements are true:
\begin{enumerate}
\item $f_{k}^{r}-f_{m}^{r}\in{U_{[i-1,j-1]}(n-1,q)}$ for $k,m\in\Sigma_q$.
\item $\sum_{k=0}^{q-1}f_{k}^{r}\in{U_{[i,j]}(n-1,q)}$.
\item $f_{k}^{r}\in{U_{[i-1,j]}(n-1,q)}$ for $k\in\Sigma_q$.
\end{enumerate}
\end{lemma}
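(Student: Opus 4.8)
The plan is to peel off one coordinate and exploit the tensor structure of the eigenspaces of $H(n,q)$. Fix $r\in\{1,\ldots,n\}$ and identify $\Sigma_q^n$ with $\Sigma_q\times\Sigma_q^{n-1}$ so that the first factor records coordinate $r$; then $\mathbb{R}^{\Sigma_q^n}=\mathbb{R}^{\Sigma_q}\otimes\mathbb{R}^{\Sigma_q^{n-1}}$, and $f_k^r$ is obtained from $f$ by evaluating the first tensor factor at $k$. Since $H(1,q)=K_q$ has exactly the two eigenvalues $q-1$ and $-1$, with eigenspaces $C:=U_0(1,q)$ (constant functions on $\Sigma_q$) and $Z:=U_1(1,q)$ (functions on $\Sigma_q$ with zero coordinate sum), we have $\mathbb{R}^{\Sigma_q}=C\oplus Z$. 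For $g$ on $\Sigma_q^{n-1}$ I write $C\cdot g$, $Z\cdot g$ etc.\ for the corresponding products of functions (in the sense of the tensor product defined above).

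First I would establish the one-step product decomposition
$$U_{[i,j]}(n,q)=\bigl(C\cdot U_{[i,j]}(n-1,q)\bigr)\oplus\bigl(Z\cdot U_{[i-1,j-1]}(n-1,q)\bigr),$$
with the usual convention that $U_{[a,b]}(m,q)=\{0\}$ for $b<0$ and is read as $U_{[\max(a,0),\min(b,m)]}(m,q)$ otherwise. Indeed, from $\mathbb{R}^{\Sigma_q}=C\oplus Z$ and $\mathbb{R}^{\Sigma_q^{n-1}}=\bigoplus_{l}U_l(n-1,q)$ one gets a direct-sum decomposition of $\mathbb{R}^{\Sigma_q^n}$ into the subspaces $C\cdot U_l(n-1,q)$ and $Z\cdot U_l(n-1,q)$; by Lemma~\ref{product of functions} the first lies in $U_l(n,q)$ and the second in $U_{l+1}(n,q)$. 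Comparing this with the eigenspace decomposition $\mathbb{R}^{\Sigma_q^n}=\bigoplus_k U_k(n,q)$ --- a direct-sum decomposition whose blocks each land inside one block of another such decomposition must agree with it block by block --- gives $U_k(n,q)=\bigl(C\cdot U_k(n-1,q)\bigr)\oplus\bigl(Z\cdot U_{k-1}(n-1,q)\bigr)$, and summing over $i\le k\le j$ yields the displayed identity.

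Now write $f\in U_{[i,j]}(n,q)$ as $f=\mathbf{1}\cdot g+h$ with $g\in U_{[i,j]}(n-1,q)$ and $h$ a finite sum $\sum_t a_t\cdot w_t$, $a_t\in Z$, $w_t\in U_{[i-1,j-1]}(n-1,q)$. Evaluating the first factor at $k$ gives $f_k^r=g+\sum_t a_t(k)\,w_t$, where the second summand lies in $U_{[i-1,j-1]}(n-1,q)$ since that space is linear. Then: (1) $f_k^r-f_m^r=\sum_t\bigl(a_t(k)-a_t(m)\bigr)w_t\in U_{[i-1,j-1]}(n-1,q)$; (2) $\sum_{k=0}^{q-1}f_k^r=qg+\sum_t\bigl(\sum_{k=0}^{q-1}a_t(k)\bigr)w_t=qg\in U_{[i,j]}(n-1,q)$, because each $a_t\in Z$ has zero coordinate sum; (3) $f_k^r=g+\sum_t a_t(k)w_t\in U_{[i,j]}(n-1,q)+U_{[i-1,j-1]}(n-1,q)=U_{[i-1,j]}(n-1,q)$, the last equality since $[i,j]\cup[i-1,j-1]=[i-1,j]$ as $i\le j$.

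The only genuinely non-routine ingredient is the product decomposition of $U_{[i,j]}(n,q)$ along a coordinate; everything after it is bookkeeping with linear combinations and with the defining property of $Z$. Alternatively, all three parts can be verified directly from the eigenvalue equation~\eqref{EigenfunctionDefinition} by splitting, at each vertex, the neighbours that do and do not differ from it in coordinate $r$ --- but the tensor viewpoint keeps the index ranges transparent, which is exactly what makes this lemma a convenient induction tool later in the paper.
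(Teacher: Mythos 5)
Your proof is correct. Note that this paper does not actually prove Lemma~\ref{Reduction 1}: it is imported from \cite{ValyuzhenichVorobev} (Lemma~4 there), so there is no in-paper argument to compare against; what you have written is a legitimate self-contained derivation using only tools already available here, namely the tensor product of functions and Lemma~\ref{product of functions}. The one genuinely substantive step, the block identification $U_k(n,q)=\bigl(C\cdot U_k(n-1,q)\bigr)\oplus\bigl(Z\cdot U_{k-1}(n-1,q)\bigr)$, is sound: both products land in $U_k(n,q)$ by Lemma~\ref{product of functions}, the eigenvalues $\lambda_k(n,q)$ are pairwise distinct, and your refinement argument (two direct-sum decompositions of the same finite-dimensional space, one of which refines the other block by block) is a standard dimension count. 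The extraction of statements (1)--(3) from $f=\mathbf{1}\cdot g+h$ is then routine bookkeeping, and your reading of the degenerate index ranges ($i=0$ or $j=n$) via truncation agrees with how the paper itself uses the lemma (e.g.\ in the third case of the proof of Theorem~\ref{Theorem 6}, where $U_{[i-1,j]}(n-1,3)$ is replaced by $U_{[i-1,n-1]}(n-1,3)$ when $j=n$). The alternative you mention, verifying all three statements directly from the eigenvalue equation~\eqref{EigenfunctionDefinition} by splitting the neighbours of a vertex according to whether they differ in coordinate $r$, is the more pedestrian route one would expect in the cited source; your tensor-decomposition route proves slightly more (the exact one-coordinate splitting of $U_{[i,j]}(n,q)$) and makes the index shifts transparent, at no extra cost.
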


\begin{lemma}[\cite{ValyuzhenichVorobev}, Lemma 5]\label{Reduction 2}
Let $f\in{U_{[i,j]}(n,q)}$, let $r\in\{1,2,\ldots,n\}$, and let $m\in\Sigma_q$. If $f_{k}^{r}\equiv 0$ for any $k\in\Sigma_{q}\setminus\{m\}$, then $f_{m}^{r}\in{U_{[i,j-1]}(n-1,q)}$.
\end{lemma}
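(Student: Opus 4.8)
The plan is to deduce Lemma~\ref{Reduction 2} directly from Lemma~\ref{Reduction 1}, exploiting that under the stated hypothesis the single surviving slice $f_{m}^{r}$ can be expressed in two different ways, each of which Lemma~\ref{Reduction 1} places in a controlled range of eigenspaces. No passage to a distinguished coordinate is needed, since Lemma~\ref{Reduction 1} already handles an arbitrary $r$.

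First I would observe that, because $f_{k}^{r}\equiv 0$ for every $k\in\Sigma_{q}\setminus\{m\}$, we have $\sum_{k=0}^{q-1}f_{k}^{r}=f_{m}^{r}$; hence part~2 of Lemma~\ref{Reduction 1} gives $f_{m}^{r}\in U_{[i,j]}(n-1,q)$. Next, since $q\ge 2$ we may fix some $k_{0}\in\Sigma_{q}\setminus\{m\}$, and then $f_{k_{0}}^{r}-f_{m}^{r}=-f_{m}^{r}$, so part~1 of Lemma~\ref{Reduction 1} yields $f_{m}^{r}\in U_{[i-1,j-1]}(n-1,q)$. Therefore $f_{m}^{r}\in U_{[i,j]}(n-1,q)\cap U_{[i-1,j-1]}(n-1,q)$.

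To conclude I would use that $\mathbb{R}^{\Sigma_{q}^{\,n-1}}=\bigoplus_{s=0}^{n-1}U_{s}(n-1,q)$ is a direct sum of the distinct eigenspaces and that $U_{[a,b]}(n-1,q)=\bigoplus_{s=a}^{b}U_{s}(n-1,q)$; for subspaces spanned by subsets of a fixed direct-sum decomposition, intersection corresponds to intersection of index sets, so $U_{[i,j]}(n-1,q)\cap U_{[i-1,j-1]}(n-1,q)=U_{[i,j-1]}(n-1,q)$. Hence $f_{m}^{r}\in U_{[i,j-1]}(n-1,q)$, as claimed; in the boundary case $i=j$ this simply says $f_{m}^{r}=0$, consistent with reading $U_{[i,j-1]}(n-1,q)$ as the trivial subspace.

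I do not expect a genuine obstacle here: granted Lemma~\ref{Reduction 1}, the only points worth a word are the elementary intersection-of-coordinate-subspaces fact and the degenerate case $i=j$. A self-contained argument avoiding Lemma~\ref{Reduction 1} would instead start from $H(n,q)=H(n-1,q)\,\square\,K_{q}$ and the induced splitting of each eigenspace into tensor summands $U_{s}(n-1,q)\otimes\langle\mathbf{1}\rangle$ and $U_{s-1}(n-1,q)\otimes W$, where $\mathbf{1}$ and $W$ denote the constant and the sum-zero subspaces of $\mathbb{R}^{\Sigma_{q}}$; but there the bookkeeping relating the slices $f_{k}^{r}$ to these summands essentially reproves Lemma~\ref{Reduction 1}, so the short route above is preferable.
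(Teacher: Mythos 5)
Your argument is correct. Note that the paper itself does not prove Lemma~\ref{Reduction 2}: it is quoted from \cite{ValyuzhenichVorobev} (Lemma~5), so there is no in-paper proof to compare against; judged on its own, your derivation is complete and uses only material already available here. The two inputs are exactly right: since $\sum_{k=0}^{q-1}f_k^r=f_m^r$, part~2 of Lemma~\ref{Reduction 1} gives $f_m^r\in U_{[i,j]}(n-1,q)$, and taking any $k_0\neq m$ (possible as $q\geq 2$), part~1 gives $-f_m^r\in U_{[i-1,j-1]}(n-1,q)$, hence $f_m^r\in U_{[i-1,j-1]}(n-1,q)$. The remaining step is the standard fact that, for the eigenspace decomposition $\mathbb{R}^{\Sigma_q^{n-1}}=\bigoplus_{s=0}^{n-1}U_s(n-1,q)$ (a genuine direct sum, as these are eigenspaces of a symmetric matrix for distinct eigenvalues), intersections of partial sums correspond to intersections of index sets, so $U_{[i,j]}(n-1,q)\cap U_{[i-1,j-1]}(n-1,q)=U_{[i,j-1]}(n-1,q)$. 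You also handle the two boundary conventions correctly: for $i=0$ the index $i-1$ is simply truncated to $0$ and the intersection is still $U_{[0,j-1]}(n-1,q)$, and for $i=j$ the conclusion degenerates to $f_m^r\equiv 0$, which is indeed forced by $U_i\cap U_{i-1}=\{0\}$. So the short route via Lemma~\ref{Reduction 1} is a valid and self-contained proof of the cited lemma; no gap.
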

In Sections \ref{SectionProblem2 for q=2}, \ref{SectionProblem2 for q=3,i/2+j<=n} and \ref{SectionProblem2 for q=3,i/2+j>n} we will use Lemmas \ref{Reduction 1} and \ref{Reduction 2} for inductive arguments.
The following two results were proved in \cite{ValyuzhenichVorobev}.
\begin{theorem}[\cite{ValyuzhenichVorobev}, Theorem 2]\label{TheoremPrelim for uniform}
Let $f$ be a uniform function from $U_{[i,j]}(n,q)$, where $i+j\geq n$, $q\geq 3$ and $f\not\equiv 0$. Then $|f|\geq{2^{n-j}(q-1)^{n-j}q^{i+j-n}}$.
\end{theorem}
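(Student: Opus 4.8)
The plan is to prove the statement by induction on $n$. Write $m(n,i,j):=2^{n-j}(q-1)^{n-j}q^{i+j-n}$; the whole argument rests on the two elementary identities $m(n,i,j)=2(q-1)\,m(n-1,i-1,j)$ and $m(n-1,i,j)=q\,m(n-1,i-1,j)$. The base case $n=1$ is immediate: if $i=0$ then $m=1$; if $i=j=1$ then $U_1(1,q)$ consists of the zero-sum functions on $\Sigma_q$, and a nonzero uniform such function is nonzero on every letter, so $|f|=q=m(1,1,1)$. In the inductive step we may assume $i\ge 1$, since $i=0$ together with $i+j\ge n$ forces $j=n$ and $m=1$. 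Fix a coordinate $r$; uniformity gives $l(r)$ with $f_k^r=:g_r$ for all $k\ne l(r)$. Set $h_r:=f_{l(r)}^r$ and $\sigma_r:=\sum_k f_k^r=(q-1)g_r+h_r$. Slicing and summing preserve uniformity, so $g_r,h_r,\sigma_r$ are all uniform, and $|f|=(q-1)|g_r|+|h_r|$.

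If $g_r\equiv 0$ for some $r$, then $f$ is supported on the single slice $x_r=l(r)$, so $|f|=|h_r|$; by Lemma~\ref{Reduction 2}, $h_r\in U_{[i,j-1]}(n-1,q)$ (when $i=j$ this space is $\{0\}$, forcing $h_r\equiv 0$ and hence $f\equiv 0$, so this case occurs only for $i<j$). As $h_r$ is uniform, nonzero and $i+(j-1)\ge n-1$, induction gives $|f|=|h_r|\ge m(n-1,i,j-1)=m(n,i,j)$, which is exactly the bound.

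Otherwise $g_r\not\equiv 0$ for every $r$. The main case is that some $r$ has $\sigma_r\not\equiv 0$. Then by Lemma~\ref{Reduction 1}(3) $g_r\in U_{[i-1,j]}(n-1,q)$ and by Lemma~\ref{Reduction 1}(2) $\sigma_r\in U_{[i,j]}(n-1,q)$, both uniform and nonzero with index sums at least $n-1$, so induction yields $|g_r|\ge m(n-1,i-1,j)=:T$ and $|\sigma_r|\ge m(n-1,i,j)=qT$. Since $\sigma_r=(q-1)g_r+h_r$, we have $Supp(\sigma_r)\subseteq Supp(g_r)\cup Supp(h_r)$, hence $|g_r|+|h_r|\ge|\sigma_r|$, and therefore
$$|f|=(q-1)|g_r|+|h_r|=(q-2)|g_r|+\bigl(|g_r|+|h_r|\bigr)\ge (q-2)T+qT=2(q-1)T=m(n,i,j),$$
using $q-2\ge 0$. (When $j=n$ one reads $U_{[\cdot,n]}(n-1,q)$ as $U_{[\cdot,n-1]}(n-1,q)$; then $T=q^{i-1}$ and the same chain gives $|f|\ge 2(q-1)q^{i-1}\ge q^{i}=m(n,i,n)$.) If instead $\sigma_r\equiv 0$ for every $r$, then $f$ sums to zero along each line, which means $f\in U_n(n,q)$; as $f\in U_{[i,j]}(n,q)$ is nonzero this forces $j=n$. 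Then $h_r=-(q-1)g_r$ for every $r$, so $|f|=q|g_r|$, and Lemma~\ref{Reduction 1}(1) applied to $g_r-h_r=qg_r$ (with $f\in U_{[n,n]}(n,q)$) puts $g_r\in U_{n-1}(n-1,q)$; induction then gives $|g_r|\ge m(n-1,n-1,n-1)=q^{n-1}$, so $|f|\ge q^{n}\ge q^{i}=m(n,i,n)$. This closes the induction.

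The step I expect to be the real obstacle is the displayed inequality. Estimating $|g_r|$ and $|h_r|$ separately against the inductive bound only gives $|f|\ge(q-1)T+T=qT$, which is short of $m(n,i,j)=2(q-1)T$ precisely because $2(q-1)>q$ for $q\ge 3$; the missing factor is recovered only by using $|g_r|+|h_r|\ge|\sigma_r|$ together with the fact that the minimum support over $U_{[i,j]}(n-1,q)$ ($=qT$) strictly exceeds that over $U_{[i-1,j]}(n-1,q)$ ($=T$), the two combining through the identity $(q-2)+q=2(q-1)$. Recognizing that one must pit the line-sum $\sigma_r=\sum_k f_k^r$ against the generic slice $g_r$—rather than treating the two slices of $f$ independently—is the one nonroutine idea.
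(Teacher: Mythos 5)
This statement is not proved in the paper at all: it is imported verbatim from \cite{ValyuzhenichVorobev} (Theorem 2 there) and used as a black box in the proof of Theorem~\ref{Theorem 5}, so there is no in-paper argument to compare yours against; the comparison can only be with the external source, whose proof is not reproduced here.

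On its own terms your proof is correct, and it is self-contained modulo the paper's Lemmas~\ref{Reduction 1} and~\ref{Reduction 2}: the identities $m(n,i,j)=2(q-1)m(n-1,i-1,j)$ and $m(n-1,i,j)=q\,m(n-1,i-1,j)$ check out, slicing and summing slices do preserve uniformity, and the pivotal step --- bounding $|f|=(q-2)|g_r|+(|g_r|+|h_r|)$ below by $(q-2)\,m(n-1,i-1,j)+m(n-1,i,j)=2(q-1)m(n-1,i-1,j)$ via $\mathrm{Supp}(\sigma_r)\subseteq \mathrm{Supp}(g_r)\cup\mathrm{Supp}(h_r)$ --- is exactly what recovers the factor $2(q-1)>q$ and is sound. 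Two points should be made explicit rather than left parenthetical. First, the degenerate index conventions: in your first case, when $i=j$ the conclusion $h_r\equiv 0$ should be argued directly (it follows immediately from the eigenvalue equation at a vertex with $x_r\neq l(r)$, since all its neighbours off the slice also vanish), and when $j=n$ the statements of Lemma~\ref{Reduction 1} must be read with $j$ replaced by $n-1$; moreover for $i=j=n$ the subcase $\sigma_r\not\equiv 0$ is vacuous, since there all line sums vanish automatically, so only your last subcase applies. Second, the claim that $\sigma_r\equiv 0$ for every $r$ forces $f\in U_n(n,q)$ is not one of the quoted lemmas and needs a line of justification: it follows from the tensor decomposition of functions on $\Sigma_q^n$ into products of constant and zero-sum factors (equivalently, $U_n(n,q)$ is the $n$-fold tensor power of the $(-1)$-eigenspace of $K_q$, cf.\ Lemma~\ref{product of functions}), since vanishing of all line sums in direction $r$ exactly kills the components with a constant factor in coordinate $r$. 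With these details spelled out, the induction closes and the bound $2^{n-j}(q-1)^{n-j}q^{i+j-n}$ follows as you state.
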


\begin{theorem}[\cite{ValyuzhenichVorobev}, Theorem 1]\label{TheoremPrelim q=3,i+j<=n}
Let $f\in{U_{[i,j]}(n,q)}$, $i+j\le n$, $q\geq 3$ and $f\not\equiv 0$. Then $|f|\geq 2^{i}(q-1)^{i}q^{n-i-j}$.
\end{theorem}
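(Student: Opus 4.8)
The plan is to prove the bound by induction on $n$. The base of the induction is $i=j=0$, where $f$ is a nonzero constant and $|f|=q^{n}=2^{0}(q-1)^{0}q^{n-0-0}$. For the inductive step I would fix one coordinate $r$, split $\Sigma_{q}^{n}$ into the $q$ parallel hyperplanes $\{x_{r}=k\}$ so that $|f|=\sum_{k=0}^{q-1}|f_{k}^{r}|$, and estimate either individual slices $f_{k}^{r}$ or suitable linear combinations of them by applying the induction hypothesis to the spaces $U_{[i,j-1]}(n-1,q)$, $U_{[i-1,j-1]}(n-1,q)$ and $U_{[i-1,j]}(n-1,q)$ furnished by Lemmas~\ref{Reduction 1} and \ref{Reduction 2}. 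The crucial preliminary step is the choice of $r$: when $i\ge 1$ we have $U_{[i,j]}(n,q)\cap U_{0}(n,q)=\{0\}$, so $f$ is not a constant, and therefore some coordinate $r$ has the property that the slices $f_{0}^{r},\dots,f_{q-1}^{r}$ are not all equal; I would fix such an $r$ (when $i=0$ any coordinate works, the target then being simply $q^{n-j}$, and the reductions below become elementary).

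Having fixed $r$, put $Z=\{k\in\Sigma_{q}:f_{k}^{r}\equiv 0\}$; since $f\not\equiv 0$, $|Z|\le q-1$. If $|Z|=q-1$, let $f_{m}^{r}$ be the unique nonzero slice; Lemma~\ref{Reduction 2} gives $f_{m}^{r}\in U_{[i,j-1]}(n-1,q)$, so by induction $|f|=|f_{m}^{r}|\ge 2^{i}(q-1)^{i}q^{(n-1)-i-(j-1)}=2^{i}(q-1)^{i}q^{n-i-j}$; this subcase cannot occur when $i=j$, as then $U_{[i,j-1]}$ is trivial and $f_{m}^{r}$ would have to vanish. If $1\le|Z|\le q-2$, fix $m\in Z$; by Lemma~\ref{Reduction 1}(1) every nonzero slice equals $f_{k}^{r}-f_{m}^{r}\in U_{[i-1,j-1]}(n-1,q)$, and as there are at least two nonzero slices the induction hypothesis yields $|f|\ge 2\cdot 2^{i-1}(q-1)^{i-1}q^{(n-1)-(i-1)-(j-1)}=2^{i}(q-1)^{i-1}q^{n-i-j+1}\ge 2^{i}(q-1)^{i}q^{n-i-j}$, the last inequality being $q\ge q-1$.

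The remaining case $|Z|=0$ --- all $q$ slices nonzero and, by the choice of $r$, $f_{a}^{r}\ne f_{b}^{r}$ for some $a,b$ --- is the one I expect to be the \emph{main obstacle}: here a single reduction does not suffice, since estimating $|f|$ from below by $|Supp(\sum_{k}f_{k}^{r})|$ loses a factor $q$, while estimating it by $|f_{a}^{r}-f_{b}^{r}|$ alone loses a factor $\tfrac{2(q-1)}{q}>1$. The way around this is to use two estimates over the same coordinate $r$ at once. On the one hand $f_{a}^{r}-f_{b}^{r}$ is a nonzero element of $U_{[i-1,j-1]}(n-1,q)$, and since $Supp(f_{a}^{r}-f_{b}^{r})\subseteq Supp(f_{a}^{r})\cup Supp(f_{b}^{r})$ the induction hypothesis gives $|f_{a}^{r}|+|f_{b}^{r}|\ge|f_{a}^{r}-f_{b}^{r}|\ge 2^{i-1}(q-1)^{i-1}q^{n-i-j+1}$. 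On the other hand, by Lemma~\ref{Reduction 1}(3) each of the remaining $q-2$ slices lies in $U_{[i-1,j]}(n-1,q)$ and is nonzero, whence $|f_{k}^{r}|\ge 2^{i-1}(q-1)^{i-1}q^{n-i-j}$. Adding these,
\[
|f|\ \ge\ 2^{i-1}(q-1)^{i-1}q^{n-i-j}\bigl(q+(q-2)\bigr)\ =\ 2^{i}(q-1)^{i}q^{n-i-j},
\]
which is exactly the claimed bound; the hypothesis $q\ge 3$ enters precisely here, guaranteeing at least one leftover slice.

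Everything else is bookkeeping that I do not expect to cause trouble: verifying that in each reduction the new parameters still satisfy $i'\le j'$ and $i'+j'\le n-1$ (both follow from $i\le j$ and $i+j\le n$), interpreting a lower index $-1$ as $0$ when $i=0$, and disposing of the trivial case $j=n$ and the degenerate ranges $i=0$ and $i=j$ as they arise. The whole content of the proof is the case analysis above, and in particular the realisation that one must pick $r$ so that the slices are not all equal, so as to land in a situation where the induction can be pushed through without loss.
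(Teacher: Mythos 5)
This statement is imported from \cite{ValyuzhenichVorobev} (Theorem~1 there) and is not proved in the present paper, so there is no in-paper proof to compare against; judged on its own merits, your argument is correct. It also uses exactly the toolkit the paper quotes for this purpose (Lemmas~\ref{Reduction 1} and~\ref{Reduction 2}) and the same style of slice induction with a case split on the number of identically zero slices that the paper itself employs in Theorems~\ref{Theorem 3}--\ref{Theorem 6}. I checked the parameter bookkeeping: in every reduction the new parameters satisfy $i'\le j'$ and $i'+j'\le n-1$, the case of a single nonzero slice indeed cannot occur when $i=j$, and your decisive count in the all-slices-nonzero case, $2^{i-1}(q-1)^{i-1}q^{n-i-j}\bigl(q+(q-2)\bigr)=2^{i}(q-1)^{i}q^{n-i-j}$, is exact; this combination of one pairwise difference in $U_{[i-1,j-1]}(n-1,q)$ with the remaining slices in $U_{[i-1,j]}(n-1,q)$ plays the same role as the pairwise averaging in the paper's proof of Theorem~\ref{Theorem 5}. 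Two cosmetic remarks: the induction should formally be stated on $n$ with base $n=1$ (your ``base $i=j=0$'' is really the degenerate constant case, handled together with the trivial case $j=n$); and the assertion that $q\ge 3$ enters essentially in the last step is not quite right --- the count $q+(q-2)=2(q-1)$ works for $q=2$ as well, consistent with the formula specializing to the bound $2^{n-j}$ of Theorem~\ref{Theorem 3} --- but neither point affects the validity of the proof for $q\ge 3$.
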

We will use Theorems \ref{TheoremPrelim for uniform} and \ref{TheoremPrelim q=3,i+j<=n} in the proof of Theorem \ref{Theorem 5}.
The following result was obtained in \cite{VorobevKrotov}.
\begin{lemma}[\cite{VorobevKrotov}, Proposition 2]\label{BitradeConstruction}
Let $n=qm+1$ and $q=p^k$, where $p$ is a prime, $m\ge 1$ and $k\ge 1$. Then there exist a $1$-perfect bitrade in $H(n,q)$ of size $2^{m+1}\cdot q^{m(q-2)}$.
\end{lemma}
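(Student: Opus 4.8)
The plan is to use the link between $1$-perfect bitrades and $(-1)$-eigenfunctions in the ``reverse'' direction: I will construct a function $f\colon\Sigma_q^{qm+1}\to\{-1,0,1\}$ lying in the $(-1)$-eigenspace with $|f|=2^{m+1}q^{m(q-2)}$, and then verify directly that $(f^{-1}(1),f^{-1}(-1))$ meets the definition of a $1$-perfect bitrade (this has to be verified, since not every $\{-1,0,1\}$-valued $(-1)$-eigenfunction comes from a bitrade). The function will be the tensor product $f:=g\cdot h\cdots h$ with $m$ factors $h$, on $H(qm+1,q)=H(1,q)\,\square\,H(q,q)\,\square\cdots\,\square\,H(q,q)$, where $g:=\mathbf{1}_{\{0\}}-\mathbf{1}_{\{1\}}\in U_1(1,q)$ is the elementary $(-1)$-eigenfunction of $H(1,q)$ and $h\in U_{q-1}(q,q)$ is a $0$-eigenfunction of $H(q,q)$ with $|h|=2q^{q-2}$, constructed next.

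The construction of $h$ is the heart of the argument, and it is where the hypothesis $q=p^k$ enters. Identify $\Sigma_q$ with the field $\mathbb{F}_q$, and let $C\subseteq\Sigma_q^q$ be the $[q,q-2,3]$ Reed--Solomon code consisting of the evaluations at all $q$ points of $\mathbb{F}_q$ of the polynomials over $\mathbb{F}_q$ of degree at most $q-3$ (for $q=2$ this is the trivial one-word code). Since the power sums $\sum_{a\in\mathbb{F}_q}a^{j}$ vanish for $1\le j\le q-3$, and $q=0$ in $\mathbb{F}_q$, every codeword of $C$ has zero coordinate sum, so $C$ is contained in the $[q,q-1,2]$ parity code $E=\{v\in\Sigma_q^q:\sum_i v_i=0\}$; as $\dim C=\dim E-1$, the code $E$ is a disjoint union of $q$ cosets of $C$. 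Because $C$ has minimum distance $3$, the Hamming spheres of radius $1$ about its $q^{q-2}$ codewords are pairwise disjoint, so the set $C^{+1}$ of words at distance exactly $1$ from $C$ has $q^{q-2}\cdot q(q-1)=q^{q-1}(q-1)$ elements; on the other hand $C^{+1}\subseteq\Sigma_q^q\setminus E$ because adding a weight-$1$ vector to a codeword changes the coordinate sum, and $|\Sigma_q^q\setminus E|=q^{q-1}(q-1)$ as well, so $C^{+1}=\Sigma_q^q\setminus E$. Now pick a coset $D:=C+v$ with $v\in E\setminus C$; then $D$ is again a distance-$3$ code of size $q^{q-2}$, disjoint from $C$, with $D^{+1}=C^{+1}+v=\Sigma_q^q\setminus E=C^{+1}$. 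For a code of minimum distance $3$ one has $|N(y)\cap C|=1$ when $y\in C^{+1}$ and $0$ otherwise, and similarly for $D$; hence $h:=\mathbf{1}_C-\mathbf{1}_D$ satisfies $\sum_{y'\in N(y)}h(y')=0$ for every $y$, i.e. $h\in U_{q-1}(q,q)$, with $|h|=|C|+|D|=2q^{q-2}$.

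Iterating Lemma~\ref{product of functions} then gives $f\in U_{1+m(q-1)}(qm+1,q)$, and since $\lambda_{1+m(q-1)}(qm+1,q)=(qm+1)(q-1)-q\bigl(1+m(q-1)\bigr)=-1$, the function $f$ is a $(-1)$-eigenfunction with values in $\{-1,0,1\}$ and $|f|=|g|\cdot|h|^{m}=2^{m+1}q^{m(q-2)}$; both $f^{-1}(1)$ and $f^{-1}(-1)$ are nonempty (place all blocks in $C$ and vary $x_0$ over $\{0,1\}$). It remains to show that $B(x)\cap Supp(f)$ has at most two elements for every vertex $x$. Indeed, as $f$ is a $(-1)$-eigenfunction we have $\sum_{y\in B(x)}f(y)=0$, so an intersection $B(x)\cap Supp(f)$ of size at most $2$ is either empty or consists of exactly one point with value $1$ and one with value $-1$---which is exactly the bitrade property. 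Writing $x=(x_0,x^{(1)},\dots,x^{(m)})$ with $x_0\in\Sigma_q$ and $x^{(t)}\in\Sigma_q^q$, so that $f(x)\ne 0$ iff $x_0\in\{0,1\}$ and $x^{(t)}\in C\cup D$ for all $t$, the key point coming from $C^{+1}=D^{+1}=\Sigma_q^q\setminus E$ is that no word of $C\cup D$ lies at distance $1$ from $C$ or from $D$; thus from a point of $Supp(f)$ one cannot stay in $Supp(f)$ by changing a single coordinate inside a block. A short case analysis then gives $|B(x)\cap Supp(f)|\le 2$: if $x\in Supp(f)$, the only support-neighbour is $x$ with $x_0$ replaced by the other element of $\{0,1\}$; and if $x\notin Supp(f)$, then either two of $x_0,x^{(1)},\dots,x^{(m)}$ fail their membership conditions and $B(x)\cap Supp(f)=\emptyset$, or only $x_0\notin\{0,1\}$ (the two support-neighbours set $x_0$ to $0$ and to $1$), or only one block $x^{(s)}\notin C\cup D$ (the support-neighbours arise from the at most one codeword of $C$ and the at most one codeword of $D$ at distance $1$ from $x^{(s)}$, using that $C$ and $D$ have minimum distance $3$). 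Hence $(f^{-1}(1),f^{-1}(-1))$ is a $1$-perfect bitrade of size $2^{m+1}q^{m(q-2)}$.

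I expect the main obstacle to be this last verification: the identity $C^{+1}=D^{+1}=\Sigma_q^q\setminus E$ has to be used carefully to exclude support-neighbours located in several different blocks simultaneously, and the vertices outside $Supp(f)$ require the distance-$3$ property of $C$ and $D$ to keep the count at $2$. Everything before that---the Reed--Solomon / parity-code identity and the tensor-product bookkeeping through Lemma~\ref{product of functions}---is routine.
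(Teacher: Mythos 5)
Your construction is correct, but note that the paper itself does not prove this lemma at all --- it is quoted as Proposition~2 of the cited work of Vorob'ev and Krotov --- so what you have produced is a self-contained proof where the paper offers only a reference. Your route is essentially the known one, and it matches the structure the paper itself hints at in its final Remark: your factor $h=\mathbf{1}_C-\mathbf{1}_D$, built from the $[q,q-2,3]$ Reed--Solomon code $C$ inside the zero-sum code $E$ and a coset $D=C+v$, is (for $q=3$, up to coordinate and symbol permutations) exactly a function of the family $B$, and your tensor product $g\cdot h^{\cdot m}$ is the analogue of the paper's $F_4(qm+1,\cdot,\cdot)$ functions, generalized to all prime powers $q$. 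All the individual steps check out: the power-sum computation giving $C\subseteq E$, the sphere-packing count giving $C^{+1}=D^{+1}=\Sigma_q^q\setminus E$ (hence $h\in U_{q-1}(q,q)$ with $|h|=2q^{q-2}$), the eigenvalue bookkeeping $\lambda_{1+m(q-1)}(qm+1,q)=-1$ via Lemma~\ref{product of functions}, and --- the part that genuinely needs care --- the verification that $(f^{-1}(1),f^{-1}(-1))$ is a $1$-perfect bitrade. Your trick of reducing that verification to the single inequality $|B(x)\cap Supp(f)|\le 2$ (and then invoking $\sum_{y\in B(x)}f(y)=0$) is clean, and the case analysis behind the inequality is complete: distance-$1$ moves inside a block leave $C\cup D\subseteq E$, at most one failing membership condition can be repaired by one move, and minimum distance $3$ of $C$ and of $D$ caps the repairs of a bad block at two. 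So the proposal stands as a valid proof of the lemma, by essentially the same construction as the cited source rather than a genuinely different method.
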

We will use Lemma \ref{BitradeConstruction} in the proof of Theorem \ref{TheoremBitrade}.
\section{Constructions of functions with the minimum cardinality of the support}\label{SectionConstructionsFunctions}
In this section we give constructions of functions that have the minimum cardinality of the support in the space $U_{[i,j]}(n,q)$ for $q=2$ and $q=3$, $i+j>n$.

We define the function $a_{q,k,m}:\Sigma_{q}^2\longrightarrow{\mathbb{R}}$ for $k,m\in{\Sigma_q}$ by the following rule:
$$
a_{q,k,m}(x,y)=\begin{cases}
1,&\text{if $x=k$ and $y\neq m$;}\\
-1,&\text{if $y=m$ and $x\neq k$;}\\
0,&\text{otherwise.}
\end{cases}
$$
We note that $|a_{q,k,m}|=2(q-1)$ and $a_{q,k,m}\in U_{1}(2,q)$ for any $k,m\in{\Sigma_q}$. Denote $A_q=\{a_{q,k,m}~|~k,m\in{\Sigma_q}\}$.

We define the function $\varphi_1:\Sigma_{3}^2\longrightarrow{\mathbb{R}}$ by the following rule:
$$
\varphi_1(x,y)=\begin{cases}
1,&\text{if $x=y=0$;}\\
-1,&\text{if $x=1$ and $y=2$;}\\
0,&\text{otherwise.}
\end{cases}
$$

For $a,b\in{\Sigma_{3}}$ denote by $a\oplus b$ the sum of $a$ and $b$ modulo $3$.
We define the function $\varphi:\Sigma_{3}^3\longrightarrow{\mathbb{R}}$ by the following rule:
$$
\varphi(x,y,z)=\begin{cases}
\varphi_1(x,y),&\text{if $z=0$;}\\
\varphi_1(x\oplus1,y\oplus1),&\text{if $z=1$;}\\
\varphi_1(x\oplus2,y\oplus2),&\text{if $z=2$.}
\end{cases}
$$
We note that $|\varphi|=6$. By the definition of an eigenfunction we see that $\varphi\in{U_2(3,3)}$.
Denote $$B=\{\varphi_{\pi,\sigma_{1},\sigma_{2},\sigma_{3}}~|~\pi\in \rm{Sym}_3,\sigma_{1},\sigma_{2},\sigma_{3}\in \rm{Sym}(\Sigma_3)\}.$$

We define the function $c_{q,k,m}:\Sigma_{q}\longrightarrow{\mathbb{R}}$ for $k,m\in{\Sigma_q}$ and $k\neq m$ by the following rule:
$$
c_{q,k,m}(x)=\begin{cases}
1,&\text{if $x=k$;}\\
-1,&\text{if $x=m$;}\\
0,&\text{otherwise.}
\end{cases}
$$
We note that $|c_{q,k,m}|=2$ and $c_{q,k,m}\in U_{1}(1,q)$ for any $k,m\in{\Sigma_q}$ and $k\neq m$.
Denote $C_q=\{c_{q,k,m}~|~k,m\in{\Sigma_q},k\neq m\}$.

We define the function $d_{q,k}:\Sigma_{q}\longrightarrow{\mathbb{R}}$ for $k\in{\Sigma_q}$ by the following rule:
$$
d_{q,k}(x)=\begin{cases}
1,&\text{if $x=k$;}\\
0,&\text{otherwise.}
\end{cases}
$$
We note that $|d_{q,k}|=1$ and $d_{q,k}\in U_{[0,1]}(1,q)$ for any $k\in{\Sigma_q}$.
Denote $D_q=\{d_{q,k}~|~k\in{\Sigma_q}\}$.

Let $e_q:\Sigma_{q}\longrightarrow{\mathbb{R}}$ and $e_q\equiv 1$.
We note that $|e_{q}|=q$ and $e_{q}\in U_{0}(1,q)$.
Denote $E_q=\{e_q\}$.

\begin{figure}[H]
\begin{center}
\includegraphics[scale=0.37]{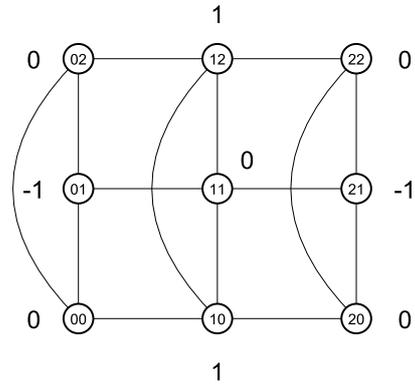}
\end{center}
\caption{Function $a_{3,1,1}$ in $H(2,3)$.}\label{Figure1}
\end{figure}

\begin{figure}[H]
\begin{center}
\includegraphics[scale=0.3]{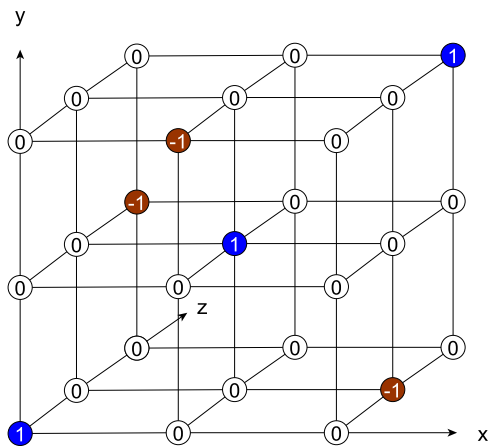}
\end{center}
\caption{Function $\varphi(x,y,z)$ in $H(3,3)$.}\label{Figure2}
\end{figure}

\begin{figure}[H]
\begin{center}
\includegraphics[scale=0.35]{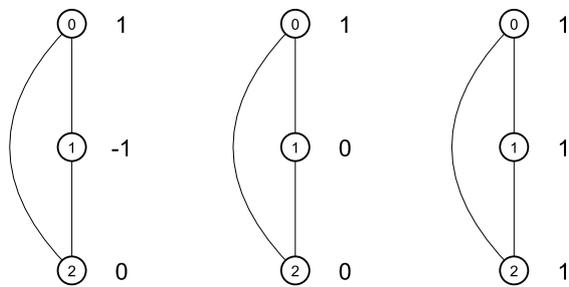}
\end{center}
\caption{Functions $c_{3,0,1}$, $d_{3,0}$ and $e_3$ in $H(1,3)$.}\label{Figure3}
\end{figure}

Let $i+j\le n$. We say that a function $f:\Sigma_{2}^n\longrightarrow{\mathbb{R}}$ belongs to the class $F_{1}(n,i,j)$ if $$f=c\cdot \prod_{k=1}^{i}g_{k}\cdot \prod_{k=1}^{n-i-j}h_{k}\cdot \prod_{k=1}^{j-i}v_{k},$$ where $c$ is a constant, $g_k\in{A_2}$ for $k\in{[1,i]}$, $h_k\in{E_2}$ for $k\in{[1,n-i-j]}$ and $v_k\in{D_2}$ for $k\in{[1,j-i]}$.

Let $i+j>n$. We say that a function $f:\Sigma_{2}^n\longrightarrow{\mathbb{R}}$ belongs to the class $F_{2}(n,i,j)$ if $$f=c\cdot \prod_{k=1}^{n-j}g_{k}\cdot \prod_{k=1}^{i+j-n}h_{k}\cdot \prod_{k=1}^{j-i}v_{k},$$ where $c$ is a constant, $g_k\in{A_2}$ for $k\in{[1,n-j]}$, $h_k\in{C_2}$ for $k\in{[1,i+j-n]}$ and $v_k\in{D_2}$ for $k\in{[1,j-i]}$.

Let $\frac{i}{2}+j\leq n$ and $i+j>n$. We say that a function $f:\Sigma_{3}^n\longrightarrow{\mathbb{R}}$ belongs to the class $F_{3}(n,i,j)$ if $$f=c\cdot \prod_{k=1}^{2n-i-2j}g_{k}\cdot \prod_{k=1}^{i+j-n}h_{k}\cdot \prod_{k=1}^{j-i}v_{k},$$ where $c$ is a constant, $g_k\in{A_3}$ for $k\in{[1,2n-i-2j]}$, $h_k\in{B}$ for $k\in{[1,i+j-n]}$ and $v_k\in{D_3}$ for $k\in{[1,j-i]}$.

Let $\frac{i}{2}+j>n$. We say that a function $f:\Sigma_{3}^n\longrightarrow{\mathbb{R}}$ belongs to the class $F_{4}(n,i,j)$ if $$f=c\cdot \prod_{k=1}^{n-j}g_{k}\cdot \prod_{k=1}^{i+2j-2n}h_{k}\cdot \prod_{k=1}^{j-i}v_{k},$$ where $c$ is a constant, $g_k\in{B}$ for $k\in{[1,n-j]}$, $h_k\in{C_3}$ for $k\in{[1,i+2j-2n]}$ and $v_k\in{D_3}$ for $k\in{[1,j-i]}$.
\begin{lemma}\label{constructions of eigenfunctions}
The following statements are true:
\begin{enumerate}
\item Let $i+j\le n$ and $f\in{F_{1}(n,i,j)}$. Then $f\in{U_{[i,j]}(n,2)}$ and $|f|=2^{n-j}$.

  \item Let $i+j>n$ and $f\in{F_{2}(n,i,j)}$. Then $f\in{U_{[i,j]}(n,2)}$ and $|f|=2^{i}$.

\item Let $\frac{i}{2}+j\leq n$, $i+j>n$ and $f\in{F_{3}(n,i,j)}$. Then $f\in{U_{[i,j]}(n,3)}$ and $|f|=2^{3(n-j)-i}\cdot 3^{i+j-n}$.

  \item Let $\frac{i}{2}+j>n$ and $f\in{F_{4}(n,i,j)}$. Then $f\in{U_{[i,j]}(n,3)}$ and $|f|=2^{i+j-n}\cdot 3^{n-j}$.

\end{enumerate}

\end{lemma}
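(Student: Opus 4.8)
The plan is to verify each of the four statements by the same two-step recipe: first identify the eigenspace membership of every factor appearing in the product, then apply Lemma~\ref{product of functions} repeatedly to locate the tensor product in the correct $U_{[i,j]}$, and finally multiply the supports of the factors to get $|f|$. The constant $c$ contributes nothing to either computation, so it can be ignored throughout.

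For eigenspace membership I would proceed factor by factor. From the definitions in this section we have recorded: $a_{q,k,m}\in U_1(2,q)$, $\varphi\in U_2(3,3)$, $c_{q,k,m}\in U_1(1,q)$, $d_{q,k}\in U_{[0,1]}(1,q)$, and $e_q\in U_0(1,q)$. For the classes $B$ and $A_q$ one also needs the trivial observation that the eigenspace of a function is unchanged under coordinate permutations and alphabet permutations, i.e. if $g\in U_t(n,q)$ then $g_{\pi,\sigma_1,\dots,\sigma_n}\in U_t(n,q)$ as well; this is immediate from~(\ref{EigenfunctionDefinition}) since relabelling vertices is a graph automorphism of $H(n,q)$. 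Hence every $g_k\in A_3$ lies in $U_1(2,3)$, every $h_k\in B$ lies in $U_2(3,3)$, and so on. Now applying Lemma~\ref{product of functions} across a product of factors living on disjoint coordinate blocks, the eigenvalue indices add and the coordinate lengths add; when a factor lies only in $U_{[0,1]}$ rather than a single $U_t$ (this happens for the $d_{q,k}$ factors), the product lands in a direct sum, with the lower index accumulating the minimum contribution $0$ and the upper index accumulating the maximum contribution $1$ from each such factor. Carrying this out for $F_1$: the $i$ factors from $A_2$ each contribute $(1,1)$ to the index pair and $2$ to the length; the $n-i-j$ factors $e_2\in U_0(1,2)$ contribute $(0,0)$ and length $1$; the $j-i$ factors $d_{2,k}\in U_{[0,1]}(1,2)$ contribute $[0,1]$ and length $1$. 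Summing: index pair $[\,i+0+0,\ i+0+(j-i)\,]=[i,j]$ and length $2i+(n-i-j)+(j-i)=n$, so $f\in U_{[i,j]}(n,2)$. The same bookkeeping gives $F_2$: indices $[(n-j)+(i+j-n)+0,\ (n-j)+(i+j-n)+(j-i)]=[i,j]$, length $2(n-j)+(i+j-n)+(j-i)=n$. For $F_3$ the factors from $A_3$ are in $U_1(2,3)$, the $B$-factors in $U_2(3,3)$, the $D_3$-factors in $U_{[0,1]}(1,3)$; indices $[(2n-i-2j)\cdot1+(i+j-n)\cdot2+0,\ \cdots+(j-i)]=[i,j]$ after simplification, and length $2(2n-i-2j)+3(i+j-n)+(j-i)=n$. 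For $F_4$ the $B$-factors are in $U_2(3,3)$, the $C_3$-factors in $U_1(1,3)$, the $D_3$-factors in $U_{[0,1]}(1,3)$; indices $[3(n-j)\cdot\tfrac{2}{3}+\cdots]$—more carefully, $2(n-j)+(i+2j-2n)+0$ on the low end (wait, the $B$ factors have both indices equal to $2$, contributing $2$ each), giving $[2(n-j)+(i+2j-2n)+0,\ 2(n-j)+(i+2j-2n)+(j-i)]=[i,j]$, and length $3(n-j)+(i+2j-2n)+(j-i)=n$.

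The support count is a direct multiplication using the recorded cardinalities $|a_{q,k,m}|=2(q-1)$, $|\varphi|=6$, $|c_{q,k,m}|=2$, $|d_{q,k}|=1$, $|e_q|=q$, together with the fact that a permutation of coordinates or alphabet preserves the size of the support. For $F_1$: $|f|=2^{i}\cdot 2^{\,n-i-j}\cdot 1^{\,j-i}=2^{n-j}$. For $F_2$: $|f|=2^{\,n-j}\cdot 2^{\,i+j-n}\cdot 1=2^{i}$. For $F_3$, using $|a_{3,k,m}|=4$ and $|\varphi|=6$: $|f|=4^{\,2n-i-2j}\cdot 6^{\,i+j-n}\cdot 1=2^{2(2n-i-2j)}\cdot 2^{\,i+j-n}\cdot 3^{\,i+j-n}=2^{4n-2i-4j+i+j-n}\cdot 3^{i+j-n}=2^{3n-i-3j}\cdot 3^{i+j-n}$; I should double-check this matches the claimed exponent $3(n-j)-i=3n-3j-i$, which it does. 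For $F_4$: $|f|=6^{\,n-j}\cdot 2^{\,i+2j-2n}\cdot 1=2^{\,n-j}\cdot 3^{\,n-j}\cdot 2^{\,i+2j-2n}=2^{\,n-j+i+2j-2n}\cdot 3^{\,n-j}=2^{\,i+j-n}\cdot 3^{\,n-j}$, matching the claim.

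The only genuinely non-routine point is confirming that the tensor product of a factor lying in a direct sum $U_{[0,1]}$ with factors lying in single eigenspaces lands in the full direct sum $U_{[i,j]}$ and not something smaller: here I would write $d_{q,k}=d_{q,k}^{(0)}+d_{q,k}^{(1)}$ with $d_{q,k}^{(t)}\in U_t(1,q)$ its eigenspace components (both nonzero, since $d_{q,k}$ is neither constant nor a $\lambda_1$-eigenfunction), expand the product by multilinearity, and observe via Lemma~\ref{product of functions} that each resulting term sits in some $U_s(n,q)$ with $s$ ranging over every integer in $[i,j]$ as the choices vary—so the full sum is genuinely a member of $U_{[i,j]}(n,q)$ and of no proper sub-sum. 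Everything else is the arithmetic of adding indices and lengths and multiplying support sizes, which the constraints $i+j\le n$, $i+j>n$, $\tfrac{i}{2}+j\le n$, or $\tfrac{i}{2}+j>n$ are exactly there to make all exponents nonnegative.
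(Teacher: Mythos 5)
Your proof is correct and follows essentially the same route as the paper: the paper's own proof just cites the recorded memberships $A_q\subset U_{1}(2,q)$, $B\subset U_{2}(3,3)$, $C_q\subset U_{1}(1,q)$, $D_q\subset U_{[0,1]}(1,q)$, $E_q\subset U_{0}(1,q)$, then applies Lemma~\ref{product of functions} together with $|f_1\cdot f_2|=|f_1|\cdot|f_2|$, which is exactly the bookkeeping you carry out explicitly (and your arithmetic for $F_1$--$F_4$ checks out). The only superfluous point is your final concern about landing in ``no proper sub-sum'': the lemma asserts only membership in $U_{[i,j]}(n,q)$, which contains every sub-sum, so expanding the $D_q$ factors into eigencomponents and noting each term lies in some $U_s(n,q)$ with $i\le s\le j$ already suffices.
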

\begin{proof}
As we noted above $A_q\subset U_{1}(2,q)$, $B\subset U_{2}(3,3)$, $C_q\subset U_{1}(1,q)$, $D_q\subset U_{[0,1]}(1,q)$ and $E_q\subset U_{0}(1,q)$. Hence using Lemma~\ref{product of functions} and the fact that $|f_1\cdot f_2|=|f_1|\cdot |f_2|$, we obtain the statement of this lemma.
\end{proof}

In Section \ref{SectionProblem2 for q=2} we prove that functions from $F_{1}(n,i,j)$ and $F_{2}(n,i,j)$ have the minimum cardinality of the support in the space $U_{[i,j]}(n,2)$ for $i+j\leq n$ and $i+j>n$ respectively. In Sections \ref{SectionProblem2 for q=3,i/2+j<=n} and \ref{SectionProblem2 for q=3,i/2+j>n} we prove that functions from $F_{3}(n,i,j)$ and $F_{4}(n,i,j)$ have the minimum cardinality of the support in the space $U_{[i,j]}(n,3)$ for
$\frac{i}{2}+j\leq n$, $i+j>n$ and $\frac{i}{2}+j>n$ respectively.
\section{Problem \ref{ProblemHamming} for $q=2$}\label{SectionProblem2 for q=2}
In this section we consider Problem \ref{ProblemHamming} for $q=2$. The first main result of this section is the following.
\begin{theorem}\label{Theorem 3}
Let $f\in{U_{[i,j]}(n,2)}$, where $i+j\leq n$ and $f\not\equiv 0$. Then
\begin{equation}\label{Bound 2}
|f|\geq 2^{n-j}
\end{equation}
 and this bound is sharp.
\end{theorem}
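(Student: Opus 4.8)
The plan is as follows. Sharpness of (\ref{Bound 2}) costs nothing: by Lemma \ref{constructions of eigenfunctions}(1), every function in the nonempty class $F_{1}(n,i,j)$ lies in $U_{[i,j]}(n,2)$ and has support of size exactly $2^{n-j}$. So the real task is the lower bound $|f|\ge 2^{n-j}$, which I would establish by induction on $n$, proving the statement for all admissible pairs $(i,j)$ at once.

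First I would clear away two degenerate situations. If $j=n$, then $i=0$ (as $i+j\le n$), the bound reads $2^{n-j}=1$, and $|f|\ge 1$ since $f\not\equiv 0$; hence from now on $j\le n-1$. For the base case $n=1$ the admissible pairs are only $(0,0)$ and $(0,1)$: in the former $U_{0}(1,2)$ consists of nonzero constants, so $|f|=2=2^{1-0}$, and in the latter $U_{[0,1]}(1,2)$ is the whole space, so $|f|\ge 1=2^{0}$. So assume $n\ge 2$, $j\le n-1$, and the theorem in dimension $n-1$.

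For the inductive step, fix the coordinate $r=1$ and look at the two slices $f_{0}^{1},f_{1}^{1}:\Sigma_{2}^{n-1}\to\mathbb{R}$. If both slices are nonzero, then by Lemma \ref{Reduction 1}(3) each lies in $U_{[i-1,j]}(n-1,2)$ (read as $U_{[0,j]}(n-1,2)$ when $i=0$), the index hypothesis for dimension $n-1$ holds (using $i+j\le n$ and $j\le n-1$), and so induction gives $|f_{0}^{1}|,|f_{1}^{1}|\ge 2^{(n-1)-j}$, whence $|f|=|f_{0}^{1}|+|f_{1}^{1}|\ge 2^{n-j}$. Otherwise one slice vanishes; say $f_{1}^{1}\equiv 0$, so $f_{0}^{1}\not\equiv 0$ (because $f\not\equiv 0$) and $|f|=|f_{0}^{1}|$. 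By Lemma \ref{Reduction 2}, $f_{0}^{1}\in U_{[i,j-1]}(n-1,2)$. If $i<j$, the pair $(i,j-1)$ is admissible for dimension $n-1$ (note $i+(j-1)\le n-1$), so induction gives $|f_{0}^{1}|\ge 2^{(n-1)-(j-1)}=2^{n-j}$. If $i=j$, this subcase is impossible: Lemma \ref{Reduction 1}(2) also puts $f_{0}^{1}=f_{0}^{1}+f_{1}^{1}$ in $U_{[i,j]}(n-1,2)=U_{i}(n-1,2)$ and Lemma \ref{Reduction 1}(1) puts $f_{0}^{1}=f_{0}^{1}-f_{1}^{1}$ in $U_{[i-1,j-1]}(n-1,2)=U_{i-1}(n-1,2)$, and since these are distinct eigenspaces $f_{0}^{1}\equiv 0$, a contradiction.

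The engine here — a dichotomy on whether the distinguished coordinate has a vanishing slice, then the reduction Lemmas \ref{Reduction 1} and \ref{Reduction 2} feeding the induction — is the standard one for results of this kind, so the only genuine care is bookkeeping at the boundary values $i=0$, $j=n$, $i=j$ and verifying that each reduced parameter pair still satisfies the hypotheses of the theorem being inducted on. The step I expect to be most easily overlooked is the $i=j$ alternative of the second case, where one must rule out the support concentrating on a single slice.
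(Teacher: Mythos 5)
Your argument is correct, but it takes a genuinely different route from the paper's. The paper, after disposing of $j=0$ (constants) and $n=1$, exploits $j\ge 1$ to conclude that $f$ is not constant, picks a coordinate $r$ with $f_0^r\neq f_1^r$, and applies Lemma~\ref{Reduction 1}(1) to the nonzero difference $f_0^r-f_1^r\in U_{[i-1,j-1]}(n-1,2)$; since the exponent $n-j$ is unchanged under $(n,j)\mapsto(n-1,j-1)$, the single chain $|f|=|f_0^r|+|f_1^r|\ge|f_0^r-f_1^r|\ge 2^{n-j}$ closes the induction with no case analysis at all. You instead run the vanishing-slice dichotomy that the paper reserves for Theorem~\ref{Theorem 4}: both slices nonzero gives $2\cdot 2^{(n-1)-j}$ via Lemma~\ref{Reduction 1}(3), one slice zero gives $2^{(n-1)-(j-1)}$ via Lemma~\ref{Reduction 2}, and the delicate subcase $i=j$ with a vanishing slice is excluded by placing $f_0^1$ simultaneously in the distinct eigenspaces $U_i(n-1,2)$ and $U_{i-1}(n-1,2)$ --- a correct observation that the paper never needs. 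The price of your route is precisely this extra bookkeeping; in particular, at $i=j=0$ your appeal to ``$U_{i-1}(n-1,2)$'' is vacuous (no such eigenspace), though this is harmless because a nonzero constant has no vanishing slice, so that configuration cannot arise (alternatively, the degenerate reading of Lemma~\ref{Reduction 1}(1) forces $f_0^1\equiv 0$ directly). What your version buys is that it reuses verbatim the machinery needed anyway for the case $i+j>n$, so the two theorems share one template; what the paper's version buys is brevity and uniformity of the inductive step. Sharpness is handled identically in both proofs, via $F_1(n,i,j)$ and Lemma~\ref{constructions of eigenfunctions}.
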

\begin{proof}
Let us prove the bound (\ref{Bound 2}) by induction on $n$, $i$ and $j$.
If $j=0$, then $i=0$ and $f\in{U_{0}(n,2)}$. So, in this case $f$ is a constant. Hence $|f|=2^{n}$ and the claim of the theorem holds.
So, we can assume that $j\geq 1$. If $n=1$ and $j\geq 1$, then $i=0$ and $j=1$. In this case $f\in{U_{[0,1]}(1,2)}$ and the claim of the theorem holds.

Let us prove the induction step for $n\geq 2$ and $j\geq 1$.
Since $j\geq 1$, $f$ is not constant. Then there exists $r\in\{1,2,\ldots,n\}$ such that $f_{0}^{r}\neq f_{1}^{r}$.
Without loss of generality, we assume that $r=n$. Denote $f_k=f_{k}^n$ for $k\in{\Sigma_2}$.
Lemma \ref{Reduction 1} implies that $f_{0}-f_{1}\in{U_{[i-1,j-1]}(n-1,2)}$.
By the induction assumption we obtain that $$|f_0-f_1|\geq 2^{n-j}.$$
Then we have $$|f|=|f_0|+|f_1|\geq |f_0-f_1|\geq 2^{n-j}.$$

Lemma \ref{constructions of eigenfunctions} implies that if $f\in{F_{1}(n,i,j)}$, then $f\in{U_{[i,j]}(n,2)}$ and $|f|=2^{n-j}$.
Thus, the bound (\ref{Bound 2}) is sharp.
\end{proof}

The second main result of this section is the following.
\begin{theorem}\label{Theorem 4}
Let $f\in{U_{[i,j]}(n,2)}$, where $i+j>n$ and $f\not\equiv 0$. Then
\begin{equation}\label{Bound 3}
|f|\geq 2^{i}
\end{equation}
 and this bound is sharp.
\end{theorem}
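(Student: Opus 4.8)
The plan is to mimic the inductive scheme of Theorem \ref{Theorem 3}, but now in the regime $i+j>n$, where the natural reduction is no longer "take a difference $f_0^r-f_1^r$" but rather "isolate a single slice $f_k^r$". I would argue by induction on $n$ (with $i,j$ varying). For the base cases, if $i=n$ then $i+j>n$ forces $j=n$ as well, so $f\in U_n(n,2)$ is (up to scalar) the $\pm1$-valued function $(-1)^{x_1+\cdots+x_n}$, which has full support $2^n=2^i$; and if $n=1$, then $i+j>1$ with $0\le i\le j\le 1$ forces $i=j=1$, again the alternating function on two points, $|f|=2=2^i$. A separate easy case is $j=n$ with $i<n$: here I would peel off one coordinate using Lemma \ref{Reduction 1}(1), noting $f_0^r-f_1^r\in U_{[i-1,n-1]}(n-1,2)$, and since $(i-1)+(n-1)>n-1$ we may apply the induction hypothesis to get $|f_0^r-f_1^r|\ge 2^{i-1}$; but this only yields $|f|\ge 2^{i-1}$, which is too weak, so I will instead treat $j=n$ differently (see below) or fold it into the main argument via a better choice of reduction.

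For the main induction step, the key dichotomy is whether $f$ depends "nontrivially" on some coordinate. Pick $r$; by Lemma \ref{Reduction 1}(3) each slice $f_k^r\in U_{[i-1,j]}(n-1,2)$, and by Lemma \ref{Reduction 1}(1) the difference $f_0^r-f_1^r\in U_{[i-1,j-1]}(n-1,2)$. I would split into two scenarios. \emph{Scenario A: there exists $r$ with exactly one slice nonzero}, say $f_0^r\not\equiv 0$ and $f_1^r\equiv 0$. Then Lemma \ref{Reduction 2} gives $f_0^r\in U_{[i-1,j-1]}(n-1,2)$. If $(i-1)+(j-1)>n-1$, i.e. $i+j>n+1$, apply the induction hypothesis for the $i+j>n$ case to get $|f|=|f_0^r|\ge 2^{i-1}$ — again off by a factor of $2$, so this bound must be sharpened; the resolution is that in Scenario A one actually expects $i+j>n$ to be impossible or to force extra structure, because killing a whole slice is "expensive" in the $i+j>n$ regime. \emph{Scenario B: for every $r$, both slices $f_0^r,f_1^r$ are nonzero}, or more usefully, for every $r$ the difference $f_0^r-f_1^r\not\equiv 0$; then $f$ is "genuinely $n$-dimensional" and I would want to run an argument producing the full factor $2^i$.

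The honest approach, and the one I would actually pursue, is a \emph{double induction on $n$ and on $j-i$} combined with case analysis on the value $j$. When $j<n$: choose $r$ so that $f_0^r-f_1^r\not\equiv 0$ (exists since $f$ is nonconstant as $j\ge i\ge 1$; if $i=0$ then $i+j>n$ forces $j>n$, impossible, so $i\ge 1$). Two cases: (i) if some slice, say $f_1^r$, is identically zero, then $f=f_0^r$ (in that coordinate) and Lemma \ref{Reduction 2} puts it in $U_{[i-1,j-1]}(n-1,2)$ — but now crucially $(i-1)+(j-1)\ge n-1$ still, and if it equals $n-1$ we invoke Theorem \ref{Theorem 3} instead, giving $|f|\ge 2^{(n-1)-(j-1)}=2^{n-j}\ge 2^i$ (using $i+j>n\Rightarrow i\le n-j$... wait, $i+j>n$ gives $i>n-j$, so this is backwards) — this shows the $j<n$ subcase genuinely needs the sharper reduction; (ii) if both slices are nonzero, use $|f|=|f_0^r|+|f_1^r|\ge 2\max(|f_0^r|,|f_1^r|)$ together with $f_0^r\in U_{[i-1,j]}(n-1,2)$ and either $(i-1)+j>n-1$ (induct) or $=n-1$ (apply Theorem \ref{Theorem 3}), getting $|f_0^r|\ge 2^{i-1}$ and hence $|f|\ge 2^i$. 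When $j=n$: here $f_0^r-f_1^r\in U_{[i-1,n-1]}(n-1,2)$ with $(i-1)+(n-1)>n-1$, so by induction $|f_0^r-f_1^r|\ge 2^{i-1}$; to get $2^i$ rather than $2^{i-1}$, I would additionally argue that \emph{both} $f_0^r$ and $f_1^r$ must be nonzero and each has support $\ge 2^{i-1}$, using that $f\in U_{[i,n]}(n,2)=U_{[i,n]}$ means $f$'s "top part" lives in $U_n$, forcing the alternating behaviour across every coordinate, hence no slice can vanish.

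The main obstacle, clearly, is closing the factor-of-two gap: the crude inequality $|f|=|f_0^r|+|f_1^r|\ge|f_0^r-f_1^r|$ loses exactly the factor $2$ we cannot afford in several branches. The fix I anticipate is that in every branch where a slice could vanish, the hypothesis $i+j>n$ (as opposed to $i+j=n$) provides exactly one unit of slack that lets the induction close — i.e. the correct statement to induct on simultaneously handles $i+j\ge n$ with bound $2^{\max(i,\,n-j)}=2^i$, and the $=$ case is precisely Theorem \ref{Theorem 3} serving as an auxiliary base. I would set up the induction so that whenever a reduction lands on the boundary $i+j=n$, Theorem \ref{Theorem 3} is invoked, and whenever it stays strictly inside, the induction hypothesis of the present theorem applies; verifying that the arithmetic $2^{(n-1)-(j-1)}$ vs.\ $2^{n-j}$ vs.\ $2^{i-1}$ vs.\ $2^i$ always lines up in the right direction (using $i>n-j$ strictly) is the delicate bookkeeping. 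Sharpness is immediate from Lemma \ref{constructions of eigenfunctions}(2): any $f\in F_2(n,i,j)$ lies in $U_{[i,j]}(n,2)$ with $|f|=2^i$.
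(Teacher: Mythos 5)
Your ``both slices nonzero'' branch is exactly the paper's argument: by Lemma \ref{Reduction 1}(3) each slice lies in $U_{[i-1,j]}(n-1,2)$, and since $(i-1)+j>n-1$ the induction hypothesis gives $2^{i-1}$ per slice, hence $2^i$ in total (no boundary appeal to Theorem \ref{Theorem 3} is ever needed, because strict inequality is preserved). But your treatment of the vanishing-slice case has a genuine gap, and it comes from misreading Lemma \ref{Reduction 2}. That lemma does \emph{not} put the surviving slice in $U_{[i-1,j-1]}(n-1,2)$; it puts it in $U_{[i,j-1]}(n-1,2)$ --- the \emph{lower} spectral index is preserved and only the upper one drops. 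Since $i+(j-1)>n-1$, the induction hypothesis applied to the single nonzero slice immediately gives $|f|=|f_m^n|\geq 2^{i}$, with no loss of a factor of $2$ and no need for any auxiliary structure. This one observation is precisely what closes the gap you kept running into; the paper's proof is just the two-case induction with Lemma \ref{Reduction 1}(3) in one case and (the correctly stated) Lemma \ref{Reduction 2} in the other.

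Your attempted repairs of the vanishing-slice case are moreover false as stated. It is not true that a slice cannot vanish when $i+j>n$, nor that $j=n$ forces alternating behaviour in every coordinate so that no slice vanishes: any $f\in F_2(n,i,j)$ with $j>i$ contains a factor $d_{2,k}$, which annihilates an entire slice in that coordinate, yet such $f$ lies in $U_{[i,j]}(n,2)$ with $i+j>n$ and attains the extremal support $2^i$ (e.g.\ $f(x_1,x_2)=c_{2,0,1}(x_1)d_{2,0}(x_2)\in U_{[1,2]}(2,2)$). Also note that $f\in U_{[i,n]}(n,2)$ need not have a nonzero projection onto $U_n(n,2)$, so the ``top part lives in $U_n$'' heuristic cannot be used. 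Your sharpness claim via Lemma \ref{constructions of eigenfunctions}(2) is correct and matches the paper.
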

\begin{proof}
Since $i+j>n$, we have $i\geq 1$.
Let us prove the bound (\ref{Bound 3}) by induction on $n$, $i$ and $j$.
If $n=1$ and $i+j>n$, then $i=j=1$. In this case $f\in{U_{1}(1,2)}$ and the bound $|f|\geq 2$ holds.
Let us prove the induction step for $n\geq 2$. Denote $f_k=f_{k}^n$ for $k\in{\Sigma_2}$.
Let us consider two cases.

Suppose that $f_0\not\equiv 0$ and $f_1\not\equiv 0$. Lemma \ref{Reduction 1} implies that $f_{k}\in{U_{[i-1,j]}(n-1,2)}$
for any $k\in{\Sigma_2}$. By the induction assumption we obtain that $|f_k|\geq 2^{i-1}$ for any $k\in{\Sigma_2}$.
Then we have $$|f|=|f_0|+|f_1|\geq 2^{i}.$$

Suppose that $f_k\equiv 0$ for some $k\in{\Sigma_2}$. Without loss of generality, we assume that $f_0\equiv 0$ and $f_1\not\equiv 0$.
Lemma \ref{Reduction 2} implies that $f_{1}\in{U_{[i,j-1]}(n-1,2)}$. By the induction assumption we obtain that $|f_1|\geq 2^{i}$.
Then we have $$|f|=|f_0|+|f_1|=|f_1|\geq 2^{i}.$$

Lemma \ref{constructions of eigenfunctions} implies that if $f\in{F_{2}(n,i,j)}$, then $f\in{U_{[i,j]}(n,2)}$ and $|f|=2^{i}$.
Thus, the bound (\ref{Bound 3}) is sharp.
\end{proof}
\section{Problem \ref{ProblemHamming} for $q=3$, $i+j>n$ and $\frac{i}{2}+j\leq n$}\label{SectionProblem2 for q=3,i/2+j<=n}
In this section we consider Problem \ref{ProblemHamming} for $q=3$, $i+j>n$ and $\frac{i}{2}+j\leq n$. The main result of this section is the following.
\begin{theorem}\label{Theorem 5}
Let $f\in{U_{[i,j]}(n,3)}$, where $\frac{i}{2}+j\leq n$, $i+j>n$ and $f\not\equiv 0$. Then
\begin{equation}\label{Bound 4}
|f|\geq 2^{3(n-j)-i}\cdot3^{i+j-n}
\end{equation}
 and this bound is sharp.
\end{theorem}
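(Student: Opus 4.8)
The plan is to prove the lower bound \eqref{Bound 4} by induction on $n$ (with $i$ and $j$ varying), mirroring the inductive scheme used in Theorems \ref{Theorem 3} and \ref{Theorem 4} but now carefully tracking the two competing regimes $i+j>n$ and $\frac{i}{2}+j\le n$. For the base cases one takes $n$ as small as the constraints $\frac{i}{2}+j\le n$, $i+j>n$, $0\le i\le j\le n$ allow; when $i=j$ these force roughly $i=j\ge\frac{2n}{3}$, and the smallest nontrivial instance is essentially $H(3,3)$ with $i=j=2$, where the function $\varphi\in U_2(3,3)$ with $|\varphi|=6=2^{3\cdot1-2}\cdot3^{2+1-3}$ is the prototype and one checks directly that no $(-1)$-eigenfunction of $H(3,3)$ has smaller support. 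More generally the base cases should reduce, via Theorem \ref{TheoremPrelim for uniform} or Theorem \ref{TheoremPrelim q=3,i+j<=n}, to situations already handled in \cite{ValyuzhenichVorobev}.

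For the induction step, fix $r$ (say $r=n$) and write $f_k=f_k^n$ for $k\in\Sigma_3$. The key structural tool is Lemma \ref{Reduction 1}: the three functions $f_0,f_1,f_2$ lie in $U_{[i-1,j]}(n-1,3)$, the differences $f_k-f_m$ lie in $U_{[i-1,j-1]}(n-1,3)$, and $f_0+f_1+f_2\in U_{[i,j]}(n-1,3)$. One then splits into cases according to how many of the $f_k$ are identically zero. If all three are nonzero, apply the induction hypothesis to each $f_k\in U_{[i-1,j]}(n-1,3)$ — but here the exponents in \eqref{Bound 4} with $(n,i,j)\mapsto(n-1,i-1,j)$ give $2^{3(n-1-j)-(i-1)}\cdot3^{(i-1)+j-(n-1)}=2^{3(n-j)-i-2}\cdot3^{i+j-n}$, so three copies give only $3\cdot2^{3(n-j)-i-2}\cdot3^{i+j-n}$, which is \emph{less} than $2^{3(n-j)-i}\cdot3^{i+j-n}$ by a factor $\tfrac34$. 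So the naive bound fails, and this is the main obstacle: one must instead extract a better bound by combining the three parts, e.g. by showing $|f_0|+|f_1|+|f_2|\ge |f_0-f_1|+|f_2-f_0|$ or some sharper inequality, or by arguing that if each $f_k$ attains the minimum then the $f_k$ must be "aligned" in a way (e.g. all uniform with a common exceptional symbol, as in the structure of class $F_3$) that forces extra support on $f$ itself. One will likely need a refined claim — possibly proved jointly with the main bound by a stronger induction hypothesis that also records structural information about minimizers (uniformity, or that $Supp(f)$ projects onto a product-like set) — so that the three cross-differences and the sum cannot all be small simultaneously.

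If instead exactly one $f_k$, say $f_0$, is zero, then $f_1,f_2\in U_{[i-1,j]}(n-1,3)$ and $f_1-f_2\in U_{[i-1,j-1]}(n-1,3)$; here $f_1+f_2=f_0+f_1+f_2\in U_{[i,j]}(n-1,3)$ as well, which by Lemma \ref{Reduction 2} (applied after noting $f_0\equiv0$) actually sits in $U_{[i,j-1]}(n-1,3)$, and one compares the induction bounds for $(n-1,i,j-1)$ and $(n-1,i-1,j-1)$ against $|f_1|+|f_2|\ge|f_1-f_2|$. If two of the $f_k$ vanish, Lemma \ref{Reduction 2} gives the surviving restriction in $U_{[i,j-1]}(n-1,3)$ directly, and one must check whether $(n-1,i,j-1)$ still lies in the regime $\frac{i}{2}+(j-1)\le n-1$, $i+(j-1)>n-1$ — the second inequality can fail (when $i+j=n+1$), pushing us into the boundary with $i+j'\le n'$, where Theorem \ref{TheoremPrelim q=3,i+j<=n} applies and gives exactly the matching bound $2^i3^{\,n'-i-j'}=2^i$; one verifies this equals $2^{3(n-j)-i}\cdot3^{i+j-n}$ when $i+j=n+1$ (both equal $2^{2(n-j)+?}$... this numeric check must be carried out). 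Finally, sharpness is immediate from Lemma \ref{constructions of eigenfunctions}(3): any $f\in F_3(n,i,j)$ lies in $U_{[i,j]}(n,3)$ with $|f|=2^{3(n-j)-i}\cdot3^{i+j-n}$.
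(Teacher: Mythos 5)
Your induction step has a genuine gap, and you essentially say so yourself: in the main case, where all three restrictions $f_0,f_1,f_2$ are nonzero, applying the inductive bound to each $f_k\in U_{[i-1,j]}(n-1,3)$ falls short of the target by a factor $\tfrac34$, and you only gesture at possible repairs. The specific repair you propose, $|f_0|+|f_1|+|f_2|\ge|f_0-f_1|+|f_2-f_0|$, is false in general (the right-hand side can be as large as $2|f_0|+|f_1|+|f_2|$). Moreover, your case split on how many restrictions vanish is the scheme that works in the complementary regime $\frac{i}{2}+j>n$ (Theorem \ref{Theorem 6}); in the present regime it cannot be pushed through as stated.

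The idea that actually closes the argument is a dichotomy on \emph{uniformity}, not on vanishing restrictions. If $f$ is uniform, Theorem \ref{TheoremPrelim for uniform} with $q=3$ gives $|f|\ge 2^{2(n-j)}\cdot 3^{i+j-n}$, which exceeds the target because $i+j>n$ means $2(n-j)>3(n-j)-i$. If $f$ is non-uniform, one may \emph{choose} the coordinate $r$ so that $f_0,f_1,f_2$ are pairwise distinct; then every difference $f_k-f_m$ ($k\neq m$) is a nonzero element of $U_{[i-1,j-1]}(n-1,3)$ --- the index $j-1$ rather than $j$ is what makes the exponents come out right. Since $\frac{i-1}{2}+(j-1)\le n-1$, the induction hypothesis (when $i+j>n+1$) or Theorem \ref{TheoremPrelim q=3,i+j<=n} (when $i+j=n+1$; both bounds equal $4^{i-1}$ there, the numeric check you left open) yields $|f_k-f_m|\ge 2^{3(n-j)-i+1}\cdot3^{i+j-n-1}$ for each of the three pairs, whence
$$|f|=\tfrac12\bigl((|f_0|+|f_1|)+(|f_0|+|f_2|)+(|f_1|+|f_2|)\bigr)\ge\tfrac32\cdot 2^{3(n-j)-i+1}\cdot3^{i+j-n-1}=2^{3(n-j)-i}\cdot3^{i+j-n}.$$
Without selecting $r$ by non-uniformity you cannot guarantee the differences are nonzero, and without handling the uniform case separately the bound on the differences is unavailable; this is precisely the missing ingredient in your proposal. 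Your base case discussion and the sharpness claim via $F_3(n,i,j)$ and Lemma \ref{constructions of eigenfunctions} are fine.
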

\begin{proof}
Let us prove the bound (\ref{Bound 4}) by induction on $n$, $i$ and $j$.
If $n\leq 3$, $i+j>n$ and $\frac{i}{2}+j\leq n$, then $n=3$ and $i=j=2$. Then $f\in U_{2}(3,3)$.
In this case the proof of the theorem can be carried out in the same way as for the induction step.

Let us prove the induction step for $n\geq 4$. If $f$ is uniform, then applying Theorem \ref{TheoremPrelim for uniform} for $q=3$ we obtain that
$$|f|\geq 2^{2(n-j)}\cdot 3^{i+j-n}>2^{3(n-j)-i}\cdot3^{i+j-n}.$$
So, we can assume that $f$ is non-uniform.
Then there exists a number $r\in\{1,\ldots,n\}$ such that $f_{k}^{r}\not\equiv f_{m}^{r}$ for any $k,m\in \Sigma_3$ and $k\neq m$.
Without loss of generality, we assume that $r=n$. Denote $f_k=f_{k}^{n}$ for $k\in \Sigma_3$.

Lemma \ref{Reduction 1} implies that $f_{k}-f_{m}\in{U_{[i-1,j-1]}(n-1,3)}$ for any $k,m\in \Sigma_3$ and $k\neq m$.
Since $\frac{i}{2}+j\leq n$ and $i+j>n$, we see that $i\geq 2$.
Moreover, we have $\frac{i-1}{2}+j-1\leq n-1$.
Then applying the induction assumption for $i+j>n+1$ and Theorem \ref{TheoremPrelim q=3,i+j<=n} for $i+j=n+1$, we obtain  that
$$|f_k-f_m|\geq 2^{3(n-j)-i+1}\cdot3^{i+j-n-1}$$
for any $k,m\in \Sigma_3$ and $k\neq m$.
Hence $$|f_k|+|f_m|\geq 2^{3(n-j)-i+1}\cdot3^{i+j-n-1}$$
for any $k,m\in \Sigma_3$ and $k\neq m$.
Then we have $$|f|=|f_0|+|f_1|+|f_2|=\frac{1}{2}((|f_0|+|f_1|)+(|f_0|+|f_2|)+(|f_1|+|f_2|))\geq 2^{3(n-j)-i}\cdot3^{i+j-n}.$$

Lemma \ref{constructions of eigenfunctions} implies that if $f\in{F_{3}(n,i,j)}$, then $f\in{U_{[i,j]}(n,3)}$ and $|f|=2^{3(n-j)-i}\cdot 3^{i+j-n}$.
Thus, the bound (\ref{Bound 4}) is sharp.
\end{proof}
Using Theorem \ref{Theorem 5} for $i=j$, we immediately obtain the following result.
\begin{corollary}
  Let $f\in{U_{i}(n,3)}$, where $\frac{n}{2}<i\leq \frac{2n}{3}$ and $f\not\equiv 0$. Then
$$|f|\geq 2^{3n-4i}\cdot3^{2i-n}$$
 and this bound is sharp.
\end{corollary}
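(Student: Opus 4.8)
The plan is to obtain this statement as the promised immediate specialization of Theorem \ref{Theorem 5} to the diagonal $i=j$. First I would record that $U_{i}(n,3)=U_{[i,i]}(n,3)$ by definition, so a nonzero $\lambda_i(n,3)$-eigenfunction is exactly a nonzero element of $U_{[i,i]}(n,3)$. Next I would check that setting $j=i$ turns the two hypotheses of Theorem \ref{Theorem 5} into the two hypotheses of the corollary: $\tfrac{i}{2}+j\le n$ becomes $\tfrac{3i}{2}\le n$, i.e. $i\le\tfrac{2n}{3}$, and $i+j>n$ becomes $2i>n$, i.e. $i>\tfrac{n}{2}$. Thus the range $\tfrac{n}{2}<i\le\tfrac{2n}{3}$ is precisely the intersection of the feasible region of Theorem \ref{Theorem 5} with the line $i=j$, and the theorem applies with no extra work.

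With this in hand, the lower bound follows by substituting $j=i$ into the bound of Theorem \ref{Theorem 5}: $2^{3(n-j)-i}\cdot 3^{i+j-n}$ becomes $2^{3(n-i)-i}\cdot 3^{2i-n}=2^{3n-4i}\cdot 3^{2i-n}$, which is the asserted inequality. For sharpness I would invoke Lemma \ref{constructions of eigenfunctions}(3): since $\tfrac{i}{2}+i\le n$ and $i+i>n$, the class $F_{3}(n,i,i)$ is well defined and non-empty (its three defining index ranges are $[1,2n-3i]$, $[1,2i-n]$ and $[1,0]$, the last being empty), and any $f\in F_{3}(n,i,i)$ satisfies $f\in U_{[i,i]}(n,3)=U_{i}(n,3)$ and $|f|=2^{3(n-i)-i}\cdot 3^{2i-n}=2^{3n-4i}\cdot 3^{2i-n}$, so the bound is attained.

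Since the argument is a pure restriction of an already-established theorem, there is no genuine obstacle here; the only points that need care are verifying that the exponent bookkeeping ($3(n-j)-i$ and $i+j-n$ specialized at $j=i$) produces exactly $3n-4i$ and $2i-n$, and confirming that the strict/non-strict pattern of the constraints ($i>\tfrac{n}{2}$ strict, $i\le\tfrac{2n}{3}$ non-strict) matches that of Theorem \ref{Theorem 5}, which it does.
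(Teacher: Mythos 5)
Your proposal is correct and follows exactly the paper's route: the paper derives this corollary immediately from Theorem \ref{Theorem 5} by setting $j=i$, with sharpness coming from the functions in $F_3(n,i,i)$ via Lemma \ref{constructions of eigenfunctions}. Your exponent bookkeeping and the translation of the hypotheses are all accurate, so nothing further is needed.
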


\section{Problem \ref{ProblemHamming} for $q=3$ and $\frac{i}{2}+j>n$}\label{SectionProblem2 for q=3,i/2+j>n}
In this section we consider Problem \ref{ProblemHamming} for $q=3$ and $\frac{i}{2}+j>n$. The main result of this section is the following.
\begin{theorem}\label{Theorem 6}
Let $f\in{U_{[i,j]}(n,3)}$, where $\frac{i}{2}+j> n$ and $f\not\equiv 0$. Then
\begin{equation}\label{Bound 5}
|f|\geq 2^{i+j-n}\cdot3^{n-j}
\end{equation}
 and this bound is sharp.
\end{theorem}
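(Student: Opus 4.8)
The plan is to prove the bound (\ref{Bound 5}) by induction on $n$, $i$ and $j$, following the same reduction strategy used in the proofs of Theorems \ref{Theorem 3}--\ref{Theorem 5}. For the base case one checks the small values of $n$ compatible with the constraint $\frac{i}{2}+j>n$ (so $i$ must be reasonably large, forcing $n$ small). The induction step splits on whether $f$ is uniform. If $f$ is uniform, Theorem \ref{TheoremPrelim for uniform} applies (since $\frac{i}{2}+j>n$ certainly implies $i+j\ge n$) and gives $|f|\ge 2^{n-j}(q-1)^{n-j}q^{i+j-n}=2^{2(n-j)}3^{i+j-n}$ with $q=3$; one then checks this exceeds $2^{i+j-n}\cdot 3^{n-j}$ exactly when $\frac{i}{2}+j>n$, which is the regime we are in, so uniform functions are not extremal here.

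If $f$ is non-uniform, there is a coordinate $r$, say $r=n$, with $f_k^n\not\equiv f_m^n$ for all distinct $k,m\in\Sigma_3$; write $f_k=f_k^n$. Here, unlike the previous section, the productive reduction is the \emph{single-restriction} one: by Lemma \ref{Reduction 1}(3), each $f_k\in U_{[i-1,j]}(n-1,3)$, and each $f_k\not\equiv 0$. The key point is that the parameter inequality is inherited: from $\frac{i}{2}+j>n$ we get $\frac{i-1}{2}+j>n-1-\frac12$, hence $\frac{i-1}{2}+j> n-1$ (using integrality, or more carefully $\frac{i-1}{2}+j \ge n-\frac12 > n-1$), so the induction hypothesis applies to each $f_k$ in $U_{[i-1,j]}(n-1,3)$, yielding $|f_k|\ge 2^{(i-1)+j-(n-1)}\cdot 3^{(n-1)-j}=2^{i+j-n}\cdot 3^{n-j-1}$. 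Summing over the three values $k\in\Sigma_3$ gives $|f|=|f_0|+|f_1|+|f_2|\ge 3\cdot 2^{i+j-n}\cdot 3^{n-j-1}=2^{i+j-n}\cdot 3^{n-j}$, as required. (One must also handle the degenerate possibility that the reduction drops $i$ below the range where the hypothesis is stated; if $i-1$ is too small relative to $j$, i.e. $\frac{i-1}{2}+j\le n-1$, the relevant function lies in a space covered by Theorem \ref{Theorem 5} or Theorem \ref{TheoremPrelim q=3,i+j<=n}, and one invokes those instead — this boundary bookkeeping is the analogue of the $i+j=n+1$ case in the proof of Theorem \ref{Theorem 5}.)

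Finally, sharpness follows from Lemma \ref{constructions of eigenfunctions}(4): a function $f\in F_4(n,i,j)$ lies in $U_{[i,j]}(n,3)$ and has $|f|=2^{i+j-n}\cdot 3^{n-j}$, matching the lower bound.

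I expect the main obstacle to be the boundary bookkeeping in the non-uniform step: one must track exactly when the single-coordinate restriction $f_k\in U_{[i-1,j]}(n-1,3)$ still satisfies $\frac{i-1}{2}+j>n-1$ versus when it crosses into the regime $\frac{i}{2}+j\le n$ (handled by Theorem \ref{Theorem 5}) or $i+j\le n$ (handled by Theorem \ref{TheoremPrelim q=3,i+j<=n}), and verify that in each of these fallback cases the bound obtained there is at least $2^{i+j-n}\cdot 3^{n-j}$. A secondary technical point is confirming the strict inequality in the uniform case, i.e. that $2^{2(n-j)}3^{i+j-n} > 2^{i+j-n}3^{n-j}$ precisely under $\frac{i}{2}+j>n$; this reduces to comparing exponents after writing both sides as powers, i.e. to the inequality $2(n-j)\log 2 + (i+j-n)\log 3 > (i+j-n)\log 2 + (n-j)\log 3$, equivalently $(n-j)\log 2 > (i+j-n)\log(3/2)\cdot(-1)$... more simply $(3(n-j)-i-j+n)$-type exponent arithmetic, which is routine once set up correctly.
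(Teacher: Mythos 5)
There is a genuine gap in your non-uniform step: from the existence of a coordinate $r$ with $f_k^r\not\equiv f_m^r$ for all distinct $k,m$ you conclude that \emph{each} $f_k\not\equiv 0$, but pairwise distinctness only forbids two restrictions vanishing simultaneously; exactly one of $f_0,f_1,f_2$ may still be identically zero. In that subcase your count falls short: applying the induction hypothesis to the two nonzero restrictions, each only known to lie in $U_{[i-1,j]}(n-1,3)$, gives $|f|\ge 2\cdot 2^{i+j-n}3^{n-j-1}=\tfrac{2}{3}\cdot 2^{i+j-n}3^{n-j}$, which does not reach the target. The missing idea (this is exactly how the paper closes this case) is that when some $f_m\equiv 0$, each remaining restriction equals its difference with $f_m$ and hence, by Lemma \ref{Reduction 1}(1), lies in the smaller space $U_{[i-1,j-1]}(n-1,3)$; since $\frac{i-1}{2}+(j-1)\ge n-1$, one gets the stronger per-restriction bound $2^{i+j-n-1}3^{n-j}$ --- by the induction hypothesis when $i+2j>2n+1$ and by Theorem \ref{Theorem 5} at the boundary $i+2j=2n+1$ --- and two such terms already give $2^{i+j-n}3^{n-j}$. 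Note that your ``boundary bookkeeping'' worry concerns a nonexistent issue: $\frac{i}{2}+j>n$ always forces $\frac{i-1}{2}+j>n-\frac{1}{2}>n-1$, so the single-restriction reduction never leaves the regime of the theorem; the place where Theorem \ref{Theorem 5} is genuinely needed is the zero-restriction subcase just described.

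For comparison, the paper's proof does not split on uniformity at all: it fixes a coordinate and cases on the number of nonzero restrictions, using Lemma \ref{Reduction 2} when only one is nonzero (so $f_0\in U_{[i,j-1]}(n-1,3)$), the difference trick above when two are nonzero, and the single-restriction reduction you use when all three are nonzero; thus Theorem \ref{TheoremPrelim for uniform} is not needed in this regime (your use of it is arithmetically sound, since $4^{n-j}3^{i+j-n}>2^{i+j-n}3^{n-j}$ when $\frac{i}{2}+j>n$, but it does not remove the problematic subcase). Two small points to add if you repair the argument: when $j=n$ the reduction must be read as $f_k\in U_{[i-1,n-1]}(n-1,3)$, and the sum then gives $3\cdot 2^{i-1}>2^{i}$, as in the paper; the sharpness part via Lemma \ref{constructions of eigenfunctions}(4) is fine.
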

\begin{proof}
Let us prove the bound (\ref{Bound 5}) by induction on $n$, $i$ and $j$.
If $n=1$ and $\frac{i}{2}+j>n$, then $i=j=1$. In this case $f\in U_{1}(1,3)$ and the inequality $|f|\geq 2$ holds.

Let us prove the induction step for $n\geq 2$.
Let us consider the functions $f_{0}^{n}$, $f_{1}^{n}$ and $f_{2}^{n}$.
Denote $f_k=f_{k}^{n}$ for $k\in \Sigma_3$.
Let $S=\{s\in \Sigma_3~|~f_s\not\equiv 0\}$.
Let us consider three cases depending on $|S|$.

In the first case we suppose $|S|=1$.
Without loss of generality, we assume that $S=\{0\}$.
Thus $f_1\equiv 0$ and $f_2\equiv 0$.
Then Lemma \ref{Reduction 2} implies that $f_0\in{U_{[i,j-1]}(n-1,3)}$.
We note that $\frac{i}{2}+j-1> n-1$.
Applying the induction assumption for $f_0$, we obtain that $$|f_0|\geq 2^{i+j-n}\cdot3^{n-j}.$$
Then we have $$|f|=|f_0|\geq 2^{i+j-n}\cdot3^{n-j}.$$

In the second case we suppose $|S|=2$.
Without loss of generality, we assume that $S=\{0,1\}$.
So, $f_2\equiv 0$ and $f_k\not\equiv 0$ for any $k\in \{0,1\}$.
Lemma \ref{Reduction 1} implies that $f_k-f_2\in{U_{[i-1,j-1]}(n-1,3)}$ for any $k\in \{0,1\}$.
Consequently, $f_k\in{U_{[i-1,j-1]}(n-1,3)}$ for any $k\in \{0,1\}$.
Applying the induction assumption for $i+2j>2n+1$ and Theorem \ref{Theorem 5} for $i+2j=2n+1$, we obtain that
$$|f_k|\geq 2^{i+j-n-1}\cdot3^{n-j}$$ for any $k\in \{0,1\}$.
Then we have $$|f|=|f_0|+|f_1|\geq  2^{i+j-n}\cdot3^{n-j}.$$

In the third case we suppose $|S|=3$.
So $f_k\not\equiv 0$ for any $k\in \Sigma_3$.
Lemma \ref{Reduction 1} implies that $f_k\in{U_{[i-1,j]}(n-1,3)}$ for $j<n$ and any $k\in \Sigma_3$ and $f_k\in{U_{[i-1,n-1]}(n-1,3)}$ for $j=n$ and any $k\in \Sigma_3$.
We note that $\frac{i-1}{2}+j> n-1$ for $j<n$.
Applying the induction assumption, we obtain that
$$|f_k|\geq 2^{i+j-n}\cdot3^{n-j-1}$$
for $j<n$ and any $k\in \Sigma_3$ and
$|f_k|\geq 2^{i-1}$
for $j=n$ and any $k\in \Sigma_3$.
Then we have $$|f|=|f_0|+|f_1|+|f_2|\geq  2^{i+j-n}\cdot3^{n-j}$$ for $j<n$ and
 $$|f|=|f_0|+|f_1|+|f_2|\geq  3\cdot2^{i-1}>2^i$$ for $j=n$.

Lemma \ref{constructions of eigenfunctions} implies that if $f\in{F_{4}(n,i,j)}$, then $f\in{U_{[i,j]}(n,3)}$ and $|f|=2^{i+j-n}\cdot 3^{n-j}$.
Thus, the bound (\ref{Bound 5}) is sharp.
\end{proof}

Using Theorem \ref{Theorem 6} for $i=j$, we immediately obtain the following result.
\begin{corollary}\label{Corollary2}
Let $f\in{U_{i}(n,3)}$, where $i>\frac{2}{3}n$ and $f\not\equiv 0$. Then
$$|f|\geq 2^{2i-n}\cdot3^{n-i}$$
and this bound is sharp.
\end{corollary}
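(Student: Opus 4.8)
The plan is to obtain the corollary as the special case $i=j$ of Theorem~\ref{Theorem 6}, which is exactly how the preceding corollaries were derived. First I would observe that by definition $U_{i}(n,3)=U_{[i,i]}(n,3)$, so any nonzero $f\in U_{i}(n,3)$ is a nonzero element of $U_{[i,j]}(n,3)$ with $j=i$. The hypothesis of Theorem~\ref{Theorem 6} in this case reads $\frac{i}{2}+i>n$, i.e.\ $\frac{3i}{2}>n$, which is precisely the assumption $i>\frac{2}{3}n$ of the corollary. Hence Theorem~\ref{Theorem 6} applies and yields $|f|\geq 2^{i+j-n}\cdot 3^{n-j}=2^{2i-n}\cdot 3^{n-i}$, which is the claimed lower bound.

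For sharpness I would exhibit an explicit function in $U_{i}(n,3)$ meeting the bound by specializing the class $F_{4}(n,i,j)$ from Section~\ref{SectionConstructionsFunctions} to $j=i$. Then the block of $D_3$-factors is empty (there are $j-i=0$ of them), so a function of the form $f=c\cdot\prod_{k=1}^{n-i}g_k\cdot\prod_{k=1}^{3i-2n}h_k$ with $g_k\in B$ and $h_k\in C_3$ belongs to $F_{4}(n,i,i)$; note that under the hypothesis $3i-2n>0$ and $n-i\ge 0$, so this product is well defined. By Lemma~\ref{constructions of eigenfunctions}(4) such an $f$ lies in $U_{[i,i]}(n,3)=U_{i}(n,3)$ and satisfies $|f|=2^{i+i-n}\cdot 3^{n-i}=2^{2i-n}\cdot 3^{n-i}$, so the bound is attained.

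There is essentially no obstacle beyond bookkeeping: all the substantive work is already contained in Theorem~\ref{Theorem 6} and Lemma~\ref{constructions of eigenfunctions}, and the only things to check are the elementary equivalences $\frac{3i}{2}>n\Leftrightarrow i>\frac{2}{3}n$ together with $i+j-n=2i-n$ and $n-j=n-i$ when $j=i$. If one preferred to verify sharpness without referring to the construction classes, one could instead note directly that $B\subset U_{2}(3,3)$ and $C_3\subset U_{1}(1,3)$, so by Lemma~\ref{product of functions} the tensor product of $n-i$ copies of a function from $B$ with $3i-2n$ copies of a function from $C_3$ lies in $U_{2(n-i)+(3i-2n)}(3(n-i)+(3i-2n),3)=U_{i}(n,3)$ and has support of size $6^{n-i}\cdot 2^{3i-2n}=2^{2i-n}\cdot 3^{n-i}$, again giving the matching example.
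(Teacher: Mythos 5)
Your proposal is correct and follows exactly the paper's route: the paper obtains Corollary~\ref{Corollary2} by specializing Theorem~\ref{Theorem 6} to $i=j$, with sharpness inherited from the class $F_4(n,i,i)$ via Lemma~\ref{constructions of eigenfunctions}. Your extra verification of the arithmetic and of the tensor-product construction is just a more explicit write-up of the same argument.
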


\section{1-perfect bitrades in the Hamming graph $H(n,3)$}\label{SectionBitrades}
In this section we study 1-perfect bitrades in $H(n,3)$. Firstly, we prove the following result.
\begin{lemma}\label{f(T0,T1)}
Let $(T_0,T_1)$ be a $1$-perfect bitrade in a graph $G$. Then $f_{(T_0,T_1)}$ is a $(-1)$-eigenfunction of $G$.
\end{lemma}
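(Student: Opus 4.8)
The plan is to verify the eigenfunction equation (\ref{EigenfunctionDefinition}) with $\lambda=-1$ directly from the definition of a $1$-perfect bitrade. First I would note that $f_{(T_0,T_1)}\not\equiv 0$: since $T_0$ and $T_1$ are nonempty, $f_{(T_0,T_1)}$ takes the value $1$ somewhere (and $-1$ somewhere), so it is a legitimate candidate for an eigenfunction.

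The key step is the identity $\sum_{y\in B(x)}f_{(T_0,T_1)}(y)=0$ for every vertex $x\in V$. To prove it I would split into the two cases allowed by the definition of a $1$-perfect bitrade. If $B(x)$ contains no element of $T_0\cup T_1$, then $f_{(T_0,T_1)}$ is identically $0$ on $B(x)$ and the sum vanishes. Otherwise $B(x)$ contains exactly one vertex $t_0\in T_0$ and exactly one vertex $t_1\in T_1$, while every other vertex of $B(x)$ lies outside $T_0\cup T_1$; hence the sum equals $f_{(T_0,T_1)}(t_0)+f_{(T_0,T_1)}(t_1)=1+(-1)=0$.

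Finally, since $B(x)=N(x)\cup\{x\}$ is a disjoint union, the identity above reads $f_{(T_0,T_1)}(x)+\sum_{y\in N(x)}f_{(T_0,T_1)}(y)=0$, which rearranges to $\sum_{y\in N(x)}f_{(T_0,T_1)}(y)=-f_{(T_0,T_1)}(x)$. As this holds for every $x\in V$, the function $f_{(T_0,T_1)}$ satisfies (\ref{EigenfunctionDefinition}) with $\lambda=-1$ and is therefore a $(-1)$-eigenfunction of $G$.

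I do not expect any genuine obstacle here: the whole argument is a two-case verification, and the only point requiring attention is to invoke that in the nontrivial case the ball $B(x)$ meets each of $T_0$ and $T_1$ in exactly one vertex, so that the $+1$ and $-1$ contributions cancel exactly; this is precisely what the definition of a $1$-perfect bitrade provides.
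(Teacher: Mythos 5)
Your proof is correct and follows essentially the same route as the paper: verify $\sum_{y\in B(x)}f_{(T_0,T_1)}(y)=0$ from the bitrade definition and rearrange to get the eigenfunction equation with $\lambda=-1$. Your two-case check and the remark that $f_{(T_0,T_1)}\not\equiv 0$ simply spell out details the paper leaves implicit.
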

\begin{proof}
Let $x$ be a vertex of $G$. By the definition of a $1$-perfect bitrade we obtain that $$\sum_{y\in B(x)}f_{(T_0,T_1)}(y)=0.$$
Therefore $$f_{(T_0,T_1)}(x)=-\sum_{y\in N(x)}f_{(T_0,T_1)}(y),$$ i.e.
$f_{(T_0,T_1)}$ is a $(-1)$-eigenfunction of $G$.
\end{proof}

\begin{corollary}\label{n=qm+1}
Let $q=p^k$, where $p$ is a prime and $k\geq 1$.
Then $H(n,q)$ has a $1$-perfect bitrade if and only if $n=qm+1$ for some $m\geq 1$.
\end{corollary}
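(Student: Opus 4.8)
The plan is to prove the two implications separately, using the correspondence between $1$-perfect bitrades and $(-1)$-eigenfunctions established in Lemma \ref{f(T0,T1)}.

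First I would handle the ``if'' direction, which is the easy one: it is exactly what Lemma \ref{BitradeConstruction} gives. Indeed, if $q=p^k$ with $p$ prime and $n=qm+1$ for some $m\ge 1$, then Lemma \ref{BitradeConstruction} produces a $1$-perfect bitrade in $H(n,q)$ (of size $2^{m+1}\cdot q^{m(q-2)}$), so in particular $H(n,q)$ has a $1$-perfect bitrade.

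Next I would do the ``only if'' direction. Suppose $(T_0,T_1)$ is a $1$-perfect bitrade in $H(n,q)$. By Lemma \ref{f(T0,T1)}, $f_{(T_0,T_1)}$ is a $(-1)$-eigenfunction of $H(n,q)$; that is, $-1$ must actually be an eigenvalue of the adjacency matrix. The eigenvalues of $H(n,q)$ are $\lambda_i(n,q)=n(q-1)-q\cdot i$ for $0\le i\le n$, so we need $n(q-1)-q\cdot i=-1$ for some integer $i$ with $0\le i\le n$. Rearranging, $n(q-1)+1=q\cdot i$, i.e. $nq-n+1\equiv 0\pmod q$, which forces $n\equiv 1\pmod q$, say $n=qm+1$ with $m\ge 0$. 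Then $i=\frac{n(q-1)+1}{q}=\frac{(qm+1)(q-1)+1}{q}=\frac{q^2 m - qm + q}{q}=qm-m+1=m(q-1)+1$, which indeed satisfies $0\le i\le n$. It remains to exclude $m=0$: if $m=0$ then $n=1$ and $i=1$, so the relevant eigenspace is $U_1(1,q)$, the set of functions $\Sigma_q\to\mathbb{R}$ summing to zero; but a $1$-perfect bitrade in $H(1,q)$ would need $B(x)=\Sigma_q$ to contain exactly one element of $T_0$ and one of $T_1$ for \emph{every} vertex $x$, which is impossible since all vertices share the same ball $\Sigma_q$ and $T_0,T_1$ are nonempty and disjoint (any $x\notin T_0\cup T_1$ already violates the definition, or more simply $|B(x)\cap T_0|=|T_0|\ge 1$ must equal $1$ and likewise for $T_1$, yet then a third vertex outside $T_0\cup T_1$ has $B(x)\cap(T_0\cup T_1)=T_0\cup T_1\ne\emptyset$ of size $2$). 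Hence $m\ge 1$, giving $n=qm+1$ for some $m\ge 1$.

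I do not expect any serious obstacle here; the only slightly delicate point is the base-case exclusion $m=0$ (equivalently $n=1$), which one should state carefully rather than wave away, and the routine verification that the divisibility condition $n\equiv 1\pmod q$ is both necessary (eigenvalue $-1$ exists) and, together with Lemma \ref{BitradeConstruction}, sufficient. Everything else is immediate from the eigenvalue formula $\lambda_i(n,q)=n(q-1)-q\cdot i$ and the two cited results.
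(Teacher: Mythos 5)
Your overall route is exactly the paper's: the ``if'' direction is Lemma \ref{BitradeConstruction}, and the ``only if'' direction is Lemma \ref{f(T0,T1)} together with the observation that $-1$ must then be an eigenvalue, i.e. $n(q-1)-q\cdot i=-1$ for some $i$, which forces $n\equiv 1\pmod q$. That part is correct and coincides with the published proof.

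The flaw is in your attempted exclusion of $m=0$, and it comes from misreading the bitrade definition. The first alternative in the definition does not require $x\in T_0\cup T_1$: a vertex $x\notin T_0\cup T_1$ whose ball $B(x)$ contains exactly one element of $T_0$ and one element of $T_1$ is precisely the allowed case, not a violation, so your ``third vertex'' argument proves nothing. Worse, the statement you are trying to establish is false under the paper's definition: in $H(1,q)=K_q$ every ball is the whole vertex set, so the pair $(\{a\},\{b\})$ with $a\neq b$ \emph{is} a $1$-perfect bitrade (it also arises from two distinct singleton $1$-perfect codes via Example \ref{ExamplePerfectBitradeCodes}), and the corresponding $\pm1$ function is indeed a $(-1)$-eigenfunction of $K_q$ since $\lambda_1(1,q)=-1$. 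Hence $n=1$ cannot be ruled out intrinsically; it is excluded only because $1$-perfect bitrades are considered here for $n\ge 3$ (Problem \ref{ProblemPerfectBitrade}), and under that standing assumption $n\equiv 1\pmod q$ immediately yields $n=qm+1$ with $m\ge 1$, which is all the paper's proof does. So keep your main argument, delete the faulty $n=1$ discussion, and instead invoke the hypothesis $n\ge 3$ (or at least $n\ge 2$).
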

\begin{proof}
Firstly, we note that there are several constructions of $1$-perfect bitrades in $H(qm+1,q)$ (for example, see Lemma \ref{BitradeConstruction}).

Suppose that $(T_0, T_1)$ is a $1$-perfect bitrade in $H(n,q)$.
By Lemma \ref{f(T0,T1)} we obtain that $f_{(T_0,T_1)}$ is a $(-1)$-eigenfunction of $H(n,q)$.
Then $-1=\lambda_{i}(n,q)$ for some $i$. Hence $n=qm+1$ for some $m\geq 1$.
\end{proof}
Thus, we can consider Problem 3 only for $n=qm+1$, where $m\geq 1$.
Now we prove the main result of this section.
\begin{theorem}\label{TheoremBitrade}
The minimum size of a $1$-perfect bitrade in $H(3m+1,3)$, where $m\ge 1$, is $2^{m+1}\cdot 3^{m}$.
\end{theorem}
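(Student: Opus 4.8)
The plan is to derive the theorem directly from results already established: the lower bound from the eigenfunction correspondence together with Corollary \ref{Corollary2}, and the matching upper bound from the explicit construction of Lemma \ref{BitradeConstruction}. First I would pin down which eigenspace the value $-1$ belongs to: for $q=3$ and $n=3m+1$ the equation $-1=\lambda_i(n,3)=2n-3i$ has the unique solution $i=2m+1$, so the $(-1)$-eigenspace of $H(3m+1,3)$ is precisely $U_{2m+1}(3m+1,3)$.

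For the lower bound, let $(T_0,T_1)$ be any $1$-perfect bitrade in $H(3m+1,3)$. By Lemma \ref{f(T0,T1)} the function $f_{(T_0,T_1)}$ is a $(-1)$-eigenfunction, hence a nonzero element of $U_{2m+1}(3m+1,3)$; and since it takes the value $1$ on $T_0$, $-1$ on $T_1$ and $0$ elsewhere, with $T_0\cap T_1=\varnothing$, its support has size exactly $|T_0|+|T_1|$, the size of the bitrade. Now $i=2m+1>2m+\tfrac23=\tfrac23 n$, so Corollary \ref{Corollary2} applies with this $i$ and yields $|f_{(T_0,T_1)}|\ge 2^{2i-n}\cdot 3^{n-i}=2^{m+1}\cdot 3^{m}$. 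Hence every $1$-perfect bitrade in $H(3m+1,3)$ has size at least $2^{m+1}\cdot 3^m$.

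For sharpness I would invoke Lemma \ref{BitradeConstruction} with $q=3=p^k$ (so $p=3$, $k=1$) and $n=qm+1=3m+1$: it produces a $1$-perfect bitrade in $H(3m+1,3)$ of size $2^{m+1}\cdot q^{m(q-2)}=2^{m+1}\cdot 3^{m}$. Combining the two bounds gives the claimed value.

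Most of the substance here lives in the earlier sections — Corollary \ref{Corollary2} (equivalently Theorem \ref{Theorem 6}) and the construction cited as Lemma \ref{BitradeConstruction} — so I do not expect a genuine obstacle; the statement is essentially a corollary of those. The one point that needs care is the index bookkeeping: one must check that the eigenvalue $-1$ lands in the \emph{upper} regime $i>\tfrac23 n$ (equivalently $\tfrac i2+j>n$ with $i=j=2m+1$), where the sharp bound $2^{2i-n}3^{n-i}$ is valid, rather than in a range governed by a different formula, and that the support of $f_{(T_0,T_1)}$ really coincides with $T_0\cup T_1$, so that $|f_{(T_0,T_1)}|$ equals the bitrade size.
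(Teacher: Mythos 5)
Your proposal is correct and follows essentially the same route as the paper: Lemma \ref{f(T0,T1)} turns the bitrade into a $(-1)$-eigenfunction, the identification $-1=\lambda_{2m+1}(3m+1,3)$ with $2m+1>\tfrac23(3m+1)$ lets Corollary \ref{Corollary2} give the lower bound $2^{m+1}\cdot 3^m$, and Lemma \ref{BitradeConstruction} with $q=3$ supplies the matching construction. The index bookkeeping you flag is exactly what the paper checks, so there is nothing to add.
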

\begin{proof}
Let $(T_0,T_1)$ be a $1$-perfect bitrade in $H(3m+1,3)$.
Firstly, let us prove the bound
\begin{equation}\label{BoundBitrade}
|T_0|+|T_1|\geq 2^{m+1}\cdot 3^m
\end{equation}
Lemma \ref{f(T0,T1)} implies that $f_{(T_0,T_1)}$ is a $(-1)$-eigenfunction of $H(3m+1,3)$.
We note that $-1=\lambda_{2m+1}(3m+1,3)$.
Applying Corollary \ref{Corollary2} for $n=3m+1$ and $i=2m+1$ ($i>\frac{2}{3}n$), we obtain that
$$|f_{(T_0,T_1)}|\geq 2^{m+1}\cdot 3^{m}.$$
Then we have $$|T_0|+|T_1|=|f_{(T_0,T_1)}|\geq 2^{m+1}\cdot 3^{m}.$$

So, it remains to prove that the bound (\ref{BoundBitrade}) is sharp. Using Lemma \ref{BitradeConstruction} for $q=3$, we see that the bound (\ref{BoundBitrade}) is sharp.
\end{proof}

\begin{remark}
We note that for $m=1$ the claim of Theorem \ref{TheoremBitrade} was recently proved by Mogilnykh and Solov'eva in \cite{MogilnykhSolov'eva}.
\end{remark}

\begin{remark}
Let $f\in F_4(3m+1,2m+1,2m+1)$.
Denote $T_0=\{x\in \Sigma_{3}^{3m+1}~|~f(x)=c\}$ and $T_1=\{x\in \Sigma_{3}^{3m+1}~|~f(x)=-c\}$.
It is easy to verify that $(T_0,T_1)$ is a $1$-perfect bitrade in $H(3m+1,3)$.
Moreover, this $1$-perfect bitrade coincides with the $1$-perfect bitrades (for $q=3$) constructed by Krotov and Vorob'ev in \cite{VorobevKrotov} and Mogilnykh and Solov'eva in \cite{MogilnykhSolov'eva}.
\end{remark}

\section{Acknowledgements}
The author is grateful to Denis Krotov, Ivan Mogilnykh and Konstantin Vorob'ev for helpful and stimulating discussions.

\end{document}